\newtheorem*{thm-ref}{Theorem}
\newtheorem{thm}{Theorem}[section]
\newtheorem{prop}[thm]{Proposition}
\newtheorem{defn}[thm]{Definition}
\newtheorem{cor}[thm]{Corollary}
\theoremstyle{remark}
\newtheorem{example}{Example}[section]
\newtheorem{remark}{Remark}[thm]
\DeclareMathOperator{\Sign}{sign}
\DeclareMathOperator{\Weight}{weight}
\definecolor{orange}{rgb}{1,0.84,0}
\definecolor{beige}{rgb}{0.93,0.87,.51}
\newcommand{\cy}{\cellcolor{yellow}}
\newcommand{\cb}{\cellcolor{beige}}
\newcolumntype{q}{>{\bfseries}c}
\title[Graphical methods establishing nontriviality of state cycles]{Graphical methods establishing nontriviality of state cycle Khovanov homology classes}
\author{Andrew Elliott}
\thanks{\noindent Partially supported by the National Science 
Foundation  DMS-0706929}
\address{Department of Mathematics\\Rice University\\6100 S. Main Street\\Houston, TX 77005 }
\email{elflord@rice.edu}
\begin{document}

\begin{abstract}
We determine when certain state cycles represent nontrivial Khovanov homology classes by analyzing features of the state graph.  Using this method, we are able to produce hyperbolic knots with arbitrarily many diagonals containing nontrivial state cycle homology classes.  This gives lower bounds on the Khovanov width of knots whose complexity precludes computation of the full homology.
\end{abstract}

\maketitle

\section{Introduction}

\subsection{Khovanov homology and thickness}
Khovanov homology is a bigraded link homology whose graded Euler characteristic is a normalization of the Jones polynomial, introduced in ~\cite{khov}.  As a knot invariant, it is stronger than the Jones polynomial, and it is defined in a combinatorial manner, lending itself to computer calculation.  Using a Krull-Schmidt decomposition, Khovanov showed in ~\cite{khov-patterns} that this homology lies on a certain number of diagonals of slope 2.  The number of diagonals with nontrivial homology is called the (homological) width of the knot, denoted $w_{Kh}(K)$; knots of width 2 are called H-thin, while those of width 3 or more are called H-thick.

There are several motivations for interest in the homological width of a knot.  First of all, for 12 crossing and less knots, Lowrance observed that the reduced Khovanov width matches knot Floer width ~\cite{lowrance-hfk-turaev}.  Secondly, Watson has used width to obstruct when a strongly invertible knot admits a lens space surgery ~\cite{watson-surgery}.  Thirdly, there is a conjectural geometric interpretation of width in terms of the Turaev genus.

Roughly speaking, a Turaev surface for a diagram can be gotten by gluing two extremal states of the diagram along some saddles; for specifics we refer the reader to Section 4 of ~\cite{dasbach-jones_on_graph}.
The genus of this surface is the Turaev genus of the diagram; the minimum such genus over all diagrams is the Turaev genus, denoted by $g_T(K)$.  Manturov ~\cite{manturov-turaev} and Champanerkar-Kofman-Stoltzfus ~\cite{CKS-graphs-and-Kh} have shown that Turaev genus gives an upper bound on (unreduced) width, namely $w_{Kh}(K) \leq g_T(K) + 2$.  Lowrance conjectures that, for prime knots, Turaev genus also gives a lower bound, so that $g_T(K) + 1 \leq w_{Kh}(K) \leq g_T(K) + 2$.  The conjecture is known to hold for prime knots with less than 13 crossings, alternating knots, almost alternating knots, closed 3-braids ~\cite{lowrance-thick-rational}, and adequate knots (unpublished work of Tetsuya Abe). 

We now give a short survey of known results on Khovanov width.  Khovanov ~\cite{khov-patterns} has shown how width is effected by the connect sum operation, namely $w_{Kh}(K_1 \# K_2) = w_{Kh}(K_1) + w_{Kh}(K_2) - 2$.  Asaeda-Przytycki ~\cite{ap-torsion} and Champanerkar-Kofman ~\cite{ck-spanning-tree} have given upper bounds on the width of $k$-almost alternating knots.  Lowrance has found the width of all 3-braids in ~\cite{lowrance-thick-rational}, using rational tangle replacement on width-preserving crossings.  Manolescu and Ozsvath ~\cite{manolescu-2007} have shown that the (unreduced) width of all quasialternating knots is 2.  By fully calculating the homology, Stosic ~\cite{stosic-2006} and Turner ~\cite{turner-2006} have found the width of many torus knots, and Suzuki has found the width of many pretzel knots ~\cite{suzuki-2006}. 

\subsection{The state cycle approach}
State cycles were introduced in ~\cite{elliott-qpmod} as a means of studying homology classes with very simple representatives.  By counting the number of different diagonals with nontrivial state cycle representatives, one can get a lower bound on the width of a knot.  State cycles transform nicely under the operation of quasipositive modification, which let us construct infinite families of H-thick knots which could not be detected by Khovanov's thickness criteria.  However, there were few theorems which could say when a state cycle represented a nontrivial homology class in Khovanov homology, a problem which this paper aims to address.

In particular, we use the structure of subgraphs of the state graph to say when certain state cycles represent nontrivial homology classes.  Describing the subgraphs requires a good deal of extra terminology, so we defer precise statements of the theorems until Sections \ref{sec-all1} and \ref{sec-1even}.  As an application of these theorems, we produce infinite families of hyperbolic knots which have nontrivial state cycles in arbitrarily many different diagonals.  These knots quickly move beyond the current computational limits for directly calculating the Khovanov homology, yet this state cycle information gives a lower bound for their Khovanov width.

\subsection*{Acknowledgements} The author would like to thank Jessica Purcell for helpful conversations.

\section{State cycles and Khovanov homology}

This will be a quick review of material from ~\cite{bn-khov-1} and ~\cite{elliott-qpmod}.  For a more detailed background on this material, we refer the reader to these sources.

\subsection{States and enhanced states}

\begin{figure}[ht!]
\centering
\includegraphics[height=50pt]{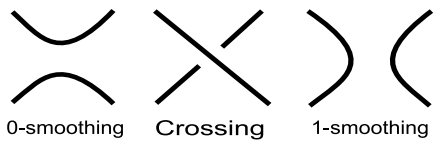}
\caption{Standard smoothing convention for a crossing}
\label{figure-smoothing}
\end{figure}

Given a diagram $ D $ for an oriented link $ L $, one can construct a state for that diagram by replacing every crossing with a choice of smoothing, per the convention in Figure \ref{figure-smoothing}.  The result will be a collection of planar loops, as shown in Figure \ref{figure-smoothing_diagram}.

\begin{figure}[ht!]
\begin{center}
\begin{displaymath}
\xymatrix{
\includegraphics{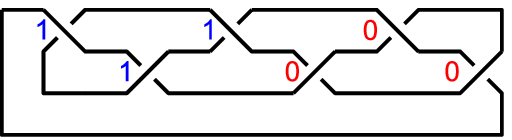} \ar@{~>}[r] & \includegraphics{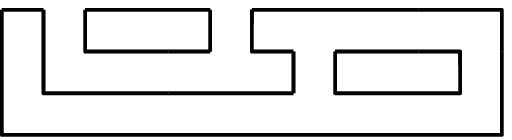} \\
}
\end{displaymath}
\end{center}
\caption{On the left, a choice of smoothings has been assigned to every crossing.  On the right, each crossing has been replaced by its chosen smoothing, giving a state for the diagram.} 
\label{figure-smoothing_diagram}
\end{figure}

Consider such a state where each loop has been marked by one of $v_+, v_-$.  Such a marked state is called an \emph{enhanced state}, and corresponds to a generator of the Khovanov chain complex.  In particular, chains in the chain complex can be viewed as rational linear combinations of these enhanced states.  The precise correspondence is that these $v_+, v_-$ are a basis of a graded rational vector space $V$, and an enhanced states correspond to a tensor product of the vectors marked on its loops, contained in a chain group $C_\sigma$ based on the underlying state $\sigma$.  However, it is very possible to frame everything in terms of these enhanced states, and this is the approach we will take in this paper.

\begin{figure}[ht!]
\begin{center}
\begin{displaymath}
\xymatrix{
\includegraphics{figures/6_3_state-small} \ar@{~>}[r] & \includegraphics{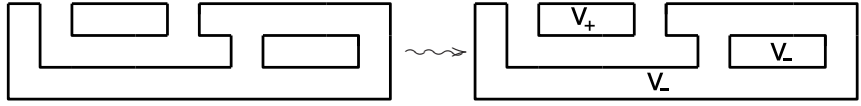} \\
}
\end{displaymath}
\end{center} 
\caption{On the left is a state, on the right is a sample enhanced state based on that state.} 
\label{figure-enhanced_state}
\end{figure}

\subsection{Bigradings of enhanced states}
Khovanov homology is a bigraded homology theory, so each enhanced state has an associated bigrading.  Let $ \alpha_\sigma $ be an enhanced state based on state $ \sigma $.  Let $ n_+ $ and $ n_- $ be the number of positive and negative crossings of $ L $ respectively, following the usual righthanded sign convention.  Let $ v_+ (\alpha) $ and $ v_- ( \alpha ) $ denote the number of loops marked by $ v_+ $ and $ v_- $ in the enhanced state.  Let $h(\sigma)$ denote the number of 1-smoothings in $\sigma$.  Then, the bigrading $ (t, q) $, representing the \emph{homological grading} and \emph{quantum grading} respectively, is given by:

\begin{align}
t(\alpha) &= h(\sigma) - n_- \tag{homological grading}\\
q(\alpha) &= v_+(\alpha) - v_-(\alpha) + h(\sigma) + n_+ - 2 n_-
\tag{quantum grading}
\label{khov-bigrading}
\end{align}

\subsection{Differentials}
Since we will be working with enhanced states, we will describe how the Khovanov differential acts on such an enhanced state.  The Khovanov differential restricts to an action on the state-based chain groups $C_\sigma$, and is broken into little pieces called edge differentials.  Specifically, to each 0-smoothing of a state, there is an associated edge differential going from $C_\sigma$ to $C_{\sigma'}$, where $\sigma'$ is the state where this 0-smoothing is changed into a 1-smoothing.

This change from a 0-smoothing to a 1-smoothing can be viewed as a cobordism, a pair of pants which either merges two loops from $\sigma$ into a single loop in $\sigma'$, or splits a single loop from $\sigma$ into two loops in $\sigma'$.  The associated edge differential acts as the identity on loops not changed by this cobordism, and acts as in Table \ref{edge-differential} on the loops of the cobordism:

\begin{table}[ht!]
\begin{displaymath}
\xymatrix@R=2pt{ \bigcirc \phantom{\otimes} \bigcirc \ar[r]^{\quad \mu} & \bigcirc & \qquad & \bigcirc \ar[r]^{\Delta \qquad \quad} & \bigcirc \phantom{\otimes} \bigcirc \phantom{+ \bigcirc \otimes \bigcirc}\\
v_+ \otimes v_+ \ar@{|->}[r] & v_+ & \qquad & v_+ \ar@{|->}[r] & v_+ \otimes v_- + v_- \otimes v_+ \\
v_+ \otimes v_- \ar@{|->}[r] & v_- & \qquad & v_- \ar@{|->}[r] & v_- \otimes v_- \phantom{+ v_- \otimes v_-}\\
v_- \otimes v_+ \ar@{|->}[r] & v_- \\
v_- \otimes v_- \ar@{|->}[r] & 0}
\end{displaymath}
\caption{The edge differential, $d_e$ takes one of the above forms depending on whether the associated cobordism takes two loops to one, or vice versa.  Outside of the changed part of the state, the edge differential acts as the identity.\label{edge-differential}}
\end{table}

These edge differentials are then bundled into a differential acting on $C_\sigma$, with a sprinkling of negative signs so that ``squares'' of edge differentials anticommute.  In this paper, we will typically be able to ignore these signs; full details of how these signs are chosen can be found in ~\cite{bn-khov-1}.

\subsection{State cycle terminology}

Enhanced states comprise the generators of the Khovanov chain complex.  But sometimes, a single such enhanced state turns out to be a cycle representative.  We will be especially interested in the situation that such an enhanced state represents a nontrivial homology class.

\begin{defn}
We say that an enhanced state $\alpha$ is a \emph{state cycle} if the associated element of the chain complex is a cycle; namely, $d(\alpha) = 0$.  We say that a state cycle is \emph{nontrivial} if it represents a nontrivial homology class.
\label{def-state_cycle}
\end{defn}

\begin{figure}[ht!]
\centering
\includegraphics[height=50pt]{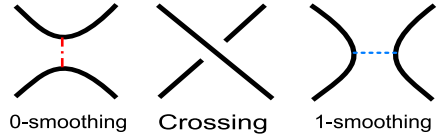}
\caption{Smoothings marked with traces of the crossings. 0-smoothings will be red, dot-and-dashed lines; 1-smoothings will be blue, dotted lines.}
\label{figure-trace}
\end{figure}

To talk about whether such an enhanced state is a homology cycle, or represents a nontrivial homology class, we have to see how the edge differentials act.  In order to analyze the action of these edge differentials, it is helpful to record not just the smoothings of $\sigma$, but also the traces of the crossings, as shown in Figure \ref{figure-trace}. The \emph{trace} of a crossing is a shadow to show where a crossing has been smoothed to get the state $\sigma$, and represents where an edge differential either enters or exits a chain generator based on $\sigma$.  See Figure \ref{figure-trace-diff} for a schematic of this relation.  A state with all its tracings marked is said to be a \emph{traced state}; an enhanced state with all its tracing marks is said to be an \emph{enhanced trace state}, or \emph{ET state}.  See Figure \ref{figure-tracing_diagram} for an example of an ET state.

\begin{figure}[ht!]
\centering
\includegraphics{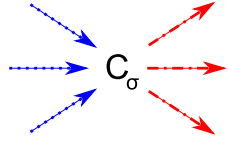}
\caption{Schematic for how traces correspond to edge differentials.  Red dot-and-dash edge differentials exit the chain group, and come from the red dot-and-dash 0-traces; blue dotted edge differentials enter the chain group, and come from the blue dotted 1-traces.}
\label{figure-trace-diff}
\end{figure}

\begin{figure}[ht!]
\begin{center}
\begin{displaymath}
\xymatrix{
\includegraphics{figures/6_3_diagram_numbered-small} \ar@{~>}[r] & \includegraphics{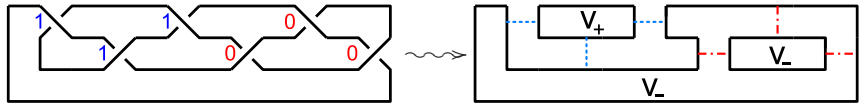} \\
}
\end{displaymath}
\end{center}
\caption{On the left, a choice of smoothings has been assigned to every crossing.  On the right, an ET state corresponding to $v_+ \otimes v_- \otimes v_-$ is shown.} 
\label{figure-tracing_diagram}
\end{figure}

Let us introduce some further terminology to discuss these traces of crossings.  We say that a trace is a \emph{mergetrace} or \emph{pinchtrace} if the trace connects two or one loop in $\sigma$, respectively.  See Figure \ref{figure-tracetypes} for some examples.  The terminology is meant to suggest that when the crossing associated to a mergetrace is changed to the opposite smoothing, the two loops joined by the mergetrace are merged together; similarly, when the crossing associated to a pinchtrace is changed to the opposite smoothing, the original loop is pinched into a pair of loops.

\begin{figure}[ht!]
\centering
\includegraphics{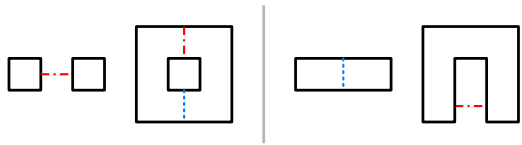}
\caption{The left half shows examples of mergetraces; the right half shows examples of pinchtraces.}
\label{figure-tracetypes}
\end{figure}

We can further differentiate traces by keeping track of which kind of smoothing they are associated to in $\sigma$.  If a mergetrace is associated to a crossing that has been 0-smoothed in $\sigma$, we say it is a \emph{0-mergetrace}, and so on.  A state is said to be \emph{0-merging} if every 0-trace in the state is a mergetrace.  Similarly, a state is said to be \emph{1-merging} if every 1-trace is a mergetrace, and so on for \emph{0-pinching} and \emph{1-pinching}.  This leads to the definition of an \emph{adequate state} as a state that is both 0-merging and 1-merging.  

Building off the above definition, a diagram is said to be \emph{adequate} if its all-0 state and its all-1 state are both adequate as states, \emph{+ adequate} if its all-0 state is adequate as a state, and \emph{- adequate} if its all-1 state is adequate as a state.  Similarly, a link is said to be any of the above if it admits a diagram with that property.

As a final bit of terminology useful in discussing ET states, we say that a loop in $\sigma$ is \emph{0-tracing} if it is touched by a 0-trace in the traced state $\sigma$.  The \emph{1-block} of $\sigma$ is the set of loops which are 1-tracing, but not 0-tracing.  We are now ready to describe the basic condition for an enhanced state to be a state cycle:

\begin{prop}[~\cite{elliott-qpmod}]
\label{zeromergingcycle}
Let $\alpha_\sigma$ be an ET state.  Then $\alpha_\sigma$ is a state cycle if and only if $\sigma$ is 0-merging and every 0-tracing loop of $\alpha_\sigma$ is marked by $v_-$.
\end{prop}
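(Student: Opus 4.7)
The plan is to exploit the decomposition of the Khovanov differential into edge differentials, one per $0$-smoothed crossing of $\sigma$, each landing in a chain group $C_{\sigma'}$ associated to a \emph{different} state $\sigma'$ (differing from $\sigma$ only at that crossing). Because distinct $0$-smoothed crossings produce distinct $\sigma'$, the images of these edge differentials lie in pairwise distinct direct summands of the chain complex, so no cancellation between them is possible. Hence $d(\alpha_\sigma) = 0$ if and only if $d_c(\alpha_\sigma) = 0$ for every $0$-smoothed crossing $c$ of $\sigma$, and the sign conventions are irrelevant. This reduces the proposition to a local check at each $0$-trace.

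Next I would do the case analysis on the type of trace at $c$. If $c$ is a $0$-pinchtrace, the associated cobordism splits one loop into two, so $d_c$ acts on the marking of that loop via $\Delta$. Inspecting Table \ref{edge-differential}, both $\Delta(v_+) = v_+ \otimes v_- + v_- \otimes v_+$ and $\Delta(v_-) = v_- \otimes v_-$ are nonzero, so the presence of any $0$-pinchtrace forces $d_c(\alpha_\sigma) \neq 0$. Thus if $\alpha_\sigma$ is a state cycle, $\sigma$ has no $0$-pinchtraces, i.e. $\sigma$ is $0$-merging. Conversely, assuming $\sigma$ is $0$-merging, every $c$ is a mergetrace and $d_c$ acts on the pair of markings of the merged loops via $\mu$. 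From the table, $\mu$ vanishes precisely on $v_- \otimes v_-$, and is nonzero on the other three inputs.

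Combining these observations gives both directions. For necessity: $\sigma$ must be $0$-merging by the pinchtrace analysis, and for every $0$-mergetrace $c$ the two merged loops must both be marked $v_-$; since a $0$-tracing loop is by definition one touched by some $0$-trace, this forces every $0$-tracing loop to be marked $v_-$. For sufficiency: if $\sigma$ is $0$-merging and every $0$-tracing loop carries $v_-$, then at each $0$-smoothed crossing $c$ the two loops merged by $c$ are both $0$-tracing and hence both $v_-$, so $d_c(\alpha_\sigma) = \mu(v_- \otimes v_-) \otimes (\text{identity on other loops}) = 0$, and summing gives $d(\alpha_\sigma) = 0$.

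The only conceptual point that requires a moment of care is the no-cancellation observation at the start; everything else is a direct reading of Table \ref{edge-differential}. There is no real obstacle here, just bookkeeping about which loops are touched by which traces, which is exactly what the "$0$-tracing loop" terminology was set up to handle.
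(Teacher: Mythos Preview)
Your argument is correct and is exactly the standard proof of this fact: decompose $d$ into edge differentials indexed by $0$-smoothed crossings, observe that distinct crossings target distinct summands $C_{\sigma'}$ so that $d(\alpha_\sigma)=0$ if and only if each $d_c(\alpha_\sigma)=0$, and then read off the local vanishing conditions from the $\mu/\Delta$ table. Note, however, that the present paper does not actually supply a proof of this proposition; it is quoted as a result from \cite{elliott-qpmod}, so there is no ``paper's own proof'' to compare against here. Your write-up would serve perfectly well as the omitted argument.
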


\begin{example}
\label{sigma0}
Suppose $D$ is a + adequate diagram.  Let $\sigma_0$ denote the all-0 state of $D$.  By definition, $\sigma_0$ is an adequate state, and in particular 0-merging.  Let $\alpha_0$ be the ET state of $\sigma_0$ where every loop is marked by $v_-$, as shown in Figure \ref{figure-example_sigma0}.  Proposition \ref{zeromergingcycle} tells us that $\alpha_0$ is a state cycle, but because $\sigma_0$ has no 1-traces, there are no differentials entering $\sigma_0$.  Therefore $\alpha_0$ actually represents a nontrivial homology class, one of minimal homological and quantum grading.
\end{example}

\begin{figure}[ht!]
\begin{center}
\begin{displaymath}
\xymatrix{
\includegraphics{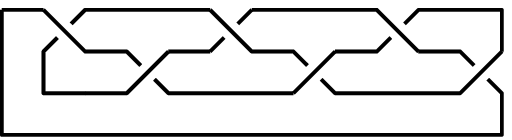} \ar@{~>}[r] & \includegraphics{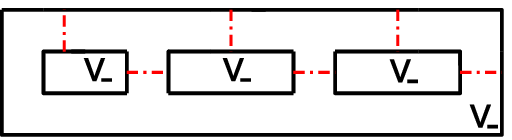} \\
}
\end{displaymath}
\end{center}
\caption{On the left is a + adequate diagram of $6_3$.  On the right, the all-0 ET state $\alpha_0$ is shown.} 
\label{figure-example_sigma0}
\end{figure}

\begin{example}
\label{sigma1}
Suppose $D$ is a - adequate diagram.  Let $\sigma_1$ denote the all-1 state of $D$.  Again, $\sigma_1$ is an adequate state, and so is both 0- and 1-merging.  Let $\alpha_1$ be the ET state of $\sigma_1$ where every loop is marked by $v_+$, as shown in Figure \ref{figure-example_sigma1}.  The set of 0-tracing loops is empty because this is the all-1 state, so Proposition \ref{zeromergingcycle} trivially holds and $\alpha_1$ is a state cycle.

However, since every other state has fewer 1-traces than $\sigma_1$, $C_{\sigma_1}$ is the only chain group of its height $h$ in the Khovanov chain complex for $D$.  This means that every edge differential of height $h-1$ targets only $C_{\sigma_1}$, allowing us to restrict to a single edge differential for each chain group of height $h-1$.  Because every 1-trace of $\sigma_1$ is a mergetrace, every incoming edge differential must lie in the $\Delta$ half of Table \ref{edge-differential}.  By inspection, no edge differential contains a term marking $ v_+ $ on every loop of an ET state for $ \sigma_1 $.  So, no linear combination of terms can have boundary equal to $\alpha_1$.  Therefore, $\alpha_1$ does not lie in $im(d)$ for the Khovanov differential, and hence must represent a nontrivial homology class of $Kh(L)$.
\end{example}

\begin{figure}[ht!]
\begin{center}
\begin{displaymath}
\xymatrix{
\includegraphics{figures/6_3_diagram} \ar@{~>}[r] & \includegraphics{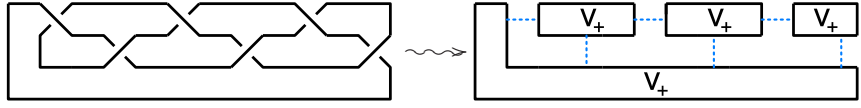} \\
}
\end{displaymath}
\end{center}
\caption{On the left is a - adequate diagram of $6_3$.  On the right, the all-1 ET state $\alpha_1$ is shown.} 
\label{figure-example_sigma1}
\end{figure}

\begin{example}
\label{olga-class}
Let $D$ be a braid diagram, and $\sigma$ be the oriented resolution, the state where every positive crossing has been 0-smoothed, and every negative crossing has been 1-smoothed.  This smoothing choice is such that the loops of the resulting state are just the strands of the braid, a concentric set of circles, with each trace going between two strands.  So, each trace is a mergetrace, and the state is adequate.  Let $ \psi $ be the ET state where every loop of $\sigma$ has been marked by $v_-$, as shown in Figure \ref{figure-example_olga}.  Proposition \ref{zeromergingcycle} tells us that $\psi $ is a cycle in Khovanov homology, but more can be said:  Plamenevskaya has shown ~\cite{olga} that $[ \psi ]$ is a transverse knot invariant, the Plamenevskaya class for $Kh(L)$.
\end{example}

\begin{figure}[ht!]
\begin{center}
\begin{displaymath}
\xymatrix{
\includegraphics{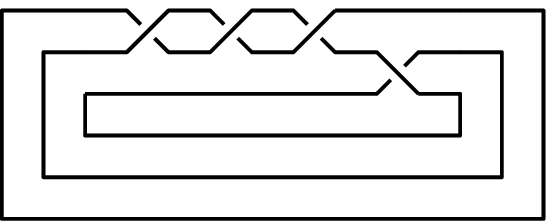} \ar@{~>}[r] & \includegraphics{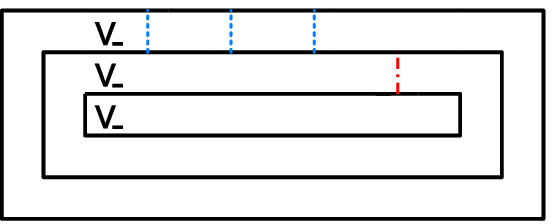} \\
}
\end{displaymath}
\end{center}
\caption{On the left is a braid diagram of the negative trefoil.  On the right, the ET state for the oriented resolution is shown, a representative for the Plamenevskaya class.} 
\label{figure-example_olga}
\end{figure}

\subsection{The state graph and classification of nontrivial state cycles}

State cycles representing a nontrivial homology class have a fairly restricted form.  To describe this form precisely, we will need to introduce the state graph and some related graph theory terms.

\begin{defn}
Given a state $\sigma$, the associated \emph{state graph} $\Gamma_\sigma$ is constructed by taking the loops of $\sigma$ to be vertices, and the traces to be edges.
\end{defn}

\begin{remark}
In general this is really a pseudograph, because an edge may start and end at the same vertex if the state is not adequate, and two vertices may be joined by multiple edges.  When the state is adequate, the state graph is an honest multigraph.
\end{remark}

The state graph turns out to be a useful tool for describing conditions for triviality and nontriviality of a state cycle.  For example, consider the following definition:

\begin{defn}
We say that a state is \emph{even} if every circuit in its state graph $\Gamma_\sigma$ has even length.  Otherwise, we say that the state is \emph{odd}.
\label{def-evenstate}
\end{defn}

\begin{remark}
A pinchtrace gives a closed path of length 1 from the loop it joins to itself.  So, an even state is also an adequate state.
\end{remark}

\begin{remark}
A graph theory consequence is that if $\sigma$ is even, $\Gamma_\sigma$ is 2-colorable.  Namely, there is an assignment of a color to every vertex, using only 2 colors, so that two vertices joined by an edge have distinct colors.  One can view a 2-coloring as a sign choice for something associated to each vertex, so that one color represents ``+'' and the other represents ``-''.
\end{remark}

\begin{example}
The primary example of an even state is the Seifert state $\sigma_s$, the state gotten by choosing the smoothing for each crossing corresponding to that used in Seifert's algorithm.  $\sigma_s$ is also called the oriented resolution, because the smoothing choice is the one consistent with the orientation of the link:  positive crossings are 0-smoothed, and negative crossings are 1-smoothed, as shown in Figure \ref{figure-oriented-smoothing}.  For an example of a Seifert state and graph, see Figure \ref{figure-seifert-stategraph}.

\begin{figure}[ht!]
\centering
\includegraphics{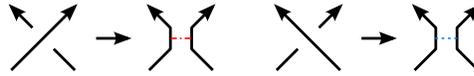}
\caption{The smoothing choice for Seifert's algorithm is the smoothing consistent with orientation.}
\label{figure-oriented-smoothing}
\end{figure}

The reason the Seifert state is even has to do with the surface $\Sigma$ associated to this state by Seifert's algorithm, formed by gluing disks to each loop of the state, and twisted bands to every trace which match the original crossing.  An odd path of traces means there is an annulus with an odd number of twists contained in the surface (gotten by following the twisted bands associated to the traces of the path), contradicting orientability of the Seifert's algorithm surface.
\label{ex-seifert-state}
\end{example}

\begin{figure}[ht!]
\centering
\includegraphics{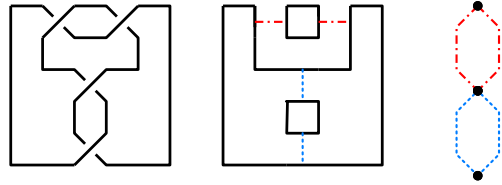}
\caption{On the left is a diagram of the figure 8.  In the middle is its Seifert state.  On the right is the state graph for this state.  By inspection this is an even state.}
\label{figure-seifert-stategraph}
\end{figure}

The fact that even states can be 2-colored turns out to be quite useful:  using this 2-coloring turns out to be the key step in showing there are two nontrivial state cycles based on the all-1 state when it is even (Theorem \ref{thm-classification-exception}).  But, for general state cycles, we will want to consider similar notions of even and oddness, restricted to the 1-block.  To describe these notions, we will examine a particular subgraph of the state graph:

\begin{defn}
Given a state $\sigma$, the associated \emph{1-block graph} $\Gamma_1$ is constructed by taking the loops of the 1-block of $\sigma$ to be vertices, and the 1-traces to be edges.
\end{defn}

In general, $\Gamma_1$ will not be connected, and its connected components turn out to be the natural setting for describing restrictions on the 1-block.

\begin{defn}
A \emph{connected component of the 1-block} refers to the set of loops from a connected component of $\Gamma_1$.  We say that a connected component of the 1-block is \emph{even} if every circuit in that component of $\Gamma_1$ has even length, and \emph{odd} otherwise.
\end{defn}

Now we are ready to state the classification theorem precisely.  Restrictions (S1) and (S2) deal with the underlying state, and restrictions (L2)-(L4) spell out the full obstructions to having loops in the 1-block marked by $v_-$:

\begin{thm}[State Cycle Classification, ~\cite{elliott-qpmod}]
For a state cycle $\alpha$ based on state $\sigma$ to represent a nontrivial homology class in Khovanov homology, it must satisfy the following restrictions:
\begin{itemize}
\item[(S1)] $\sigma$ must be 0-merging.
\item[(S2)] $\sigma$ can have no 1-pinchtraces touching loops in its 1-block.
\item[(L1)] 0-tracing loops of $\alpha$ must be marked by $v_-$.
\item[(L2)] No pair of loops both marked by $v_-$ can be joined by only 1-traces.
\item[(L3)] Every loop in an odd connected component of the 1-block must be marked by $v_+$.
\item[(L4)] At most one loop in an even connected component of the 1-block may be marked by $v_-$; all other loops in that component must be marked by $v_+$.
\end{itemize}
Furthermore, other than condition (L2), changing which loop is marked by $v_-$ for a fixed even component from (L4) only changes the sign of the resulting homology class.
\label{thm-statecycle-classification}
\end{thm}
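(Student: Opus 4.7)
My plan is to reduce the theorem to a sequence of explicit boundary constructions in the Khovanov chain complex. Conditions (S1) and (L1) are exactly the cycle condition from Proposition \ref{zeromergingcycle}, so the remaining work is to show that violating any one of (S2), (L2), (L3), (L4) forces $\alpha$ to be a boundary. In each case I produce a chain $\beta$ supported at one or a few states adjacent to $\sigma$ whose image under $d$ equals $\alpha$ up to terms that either vanish on the nose or can be cancelled by iteration.

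The base case is (L2). Suppose loops $\ell_1, \ell_2$ of $\alpha$, both marked $v_-$, are joined by a 1-mergetrace $e$. Let $\sigma_e$ be the state obtained from $\sigma$ by 0-smoothing $e$, so that $\ell_1, \ell_2$ merge into a single loop $L$ in $\sigma_e$. Take $\beta$ at $\sigma_e$ with $L$ marked $v_-$ and every other loop as in $\alpha$. The edge differential along $e$ targets $C_\sigma$ and applies $\Delta(v_-) = v_- \otimes v_-$, reproducing $\alpha$. Every other outgoing edge from $\sigma_e$ is a 0-mergetrace inherited from $\sigma$ (since 1-block loops carry no 0-traces) with both endpoints marked $v_-$; the associated edge differential therefore vanishes by $\mu(v_- \otimes v_-) = 0$. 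Hence $d(\beta) = \pm \alpha$, and iteration along a path of 1-mergetraces handles the general case. The same template handles (S2): a 1-pinchtrace $t$ on a 1-block loop $\ell$ splits $\ell$ into two loops $L_1, L_2$ in the neighboring state, with incoming differential $\mu$; one picks markings on $L_1, L_2$ making $\mu(L_1 \otimes L_2)$ match the marking of $\ell$, and the same vanishing argument for the other outgoing differentials from $\beta$ goes through.

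For (L3) and (L4) I use the following ``moving the $v_-$'' principle on the 1-block graph $\Gamma_1$. Suppose $\ell$ is marked $v_-$ in a 1-block component $C$ and $\ell'$ is adjacent to $\ell$ in $C$ via a 1-mergetrace $e$ with $\ell'$ marked $v_+$. The enhanced state $\beta$ at $\sigma_e$ marking the merged loop $L$ by $v_+$ has $d(\beta) = \pm(\alpha + \alpha')$, where $\alpha'$ is $\alpha$ with the $v_-$ transferred from $\ell$ to $\ell'$; the remaining outgoing differentials again vanish by the (L2) analysis. Thus $[\alpha] = -[\alpha']$, so moving the $v_-$ along a sequence of $n$ traces multiplies the class by $(-1)^n$. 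If $C$ is odd, any odd cycle returning to $\ell$ gives $[\alpha] = -[\alpha]$, hence $[\alpha] = 0$ over $\Q$, proving (L3). If $C$ is even but contains two $v_-$'s, I move one of them along a path toward the other until they become adjacent, at which point the (L2) construction produces a boundary, proving (L4). The concluding sign statement of the theorem is the same principle applied to two single-$v_-$ markings in an even component: they are linked by an even-length path in $C$, and the iterated moving principle identifies their homology classes up to a computable sign.

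The main obstacle I anticipate is sign bookkeeping: the Khovanov differential carries Bar-Natan signs at each edge, and the iteration in (L3) and (L4) requires that these signs, combined with those from $\Delta(v_+) = v_+ \otimes v_- + v_- \otimes v_+$, really accumulate to $(-1)^n$ across a cycle in $\Gamma_1$. A secondary concern is a careful check that the auxiliary outgoing differentials from $\beta$ vanish in all local configurations, not just the cleanest ones; the key enabling fact throughout is that 1-block loops are by definition untouched by 0-traces of $\sigma$, which removes most of the potential cross-terms and reduces each verification to an instance of $\mu(v_- \otimes v_-) = 0$.
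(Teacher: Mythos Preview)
The paper does not prove this theorem; it is quoted from \cite{elliott-qpmod} and used only as input. So there is no in-paper argument to compare against directly. That said, your template is correct and is exactly the mechanism the paper itself invokes in the proof of Theorem~\ref{thm-classification-exception}, where the relation $d(\beta'_h)=\alpha'_h+\alpha'_{h+1}$ is used to telescope between single-$v_-$ markings.

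Two remarks on your stated worries. First, the Bar-Natan signs are a non-issue: once you have shown that every edge differential out of $\beta$ except $d_e$ vanishes, you get $d(\beta)=\epsilon\,(\alpha+\alpha')$ for some global sign $\epsilon$, and since $\im d$ is a subspace this forces $[\alpha]=-[\alpha']$ independent of $\epsilon$. The $-1$ per step comes from the plus sign in $\Delta(v_+)=v_+\otimes v_-+v_-\otimes v_+$, not from the edge signs, so the odd-cycle argument for (L3) goes through cleanly. Second, the auxiliary-differential check in (L2) does not actually require $\ell_1,\ell_2$ to lie in the 1-block. Any 0-trace $f$ of $\sigma$ touching $\ell_1$ has its other end on some 0-tracing loop $\ell_3$, necessarily marked $v_-$ by (L1); the hypothesis ``joined by only 1-traces'' rules out $\ell_3=\ell_2$, so $f$ stays a mergetrace in $\sigma_e$ and $\mu(v_-\otimes v_-)=0$ kills it. In particular no iteration is needed for (L2): a single 1-trace between $\ell_1$ and $\ell_2$ suffices, and the ``only 1-traces'' clause is there precisely to prevent a 0-trace between them from becoming a 0-pinchtrace on the merged loop.

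One small gap to patch in (L3): the loop carrying the $v_-$ need not itself lie on an odd cycle of $\Gamma_1$. You should first walk the $v_-$ along a path to a vertex on an odd cycle and then traverse the cycle; the concatenation path--cycle--reverse path is an odd closed walk based at the original vertex, and your moving principle applied along it yields $[\alpha]=-[\alpha]$. If at any step the target vertex is already $v_-$, you are in the (L2) situation and finish immediately.
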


This theorem gives a good picture of what a nontrivial state cycle should look like, but it does not guarantee that such a candidate actually represents a nontrivial homology class.  We address this issue in the next two sections, providing some graph-based conditions which guarantee nontriviality of a state cycle.

\section{A graph obstruction for even all-1 states}
\label{sec-all1}

In this section, we examine the nontriviality of a very specific kind of state cycle, an even all-1 state cycle with a single loop marked by $v_-$.  While this initial case is restrictive, it serves as a good model for the more general theorem of Section \ref{sec-1even}.

\begin{thm}
Suppose that the all-1 state $\sigma$ for a nonsplit link diagram is even.  Then there are always two nontrivial homology classes represented by state cycles based on $\sigma$:  one is uniquely represented by marking a $v_+$ on every loop, and the other is represented up to sign by marking a single loop of choice by $v_-$ and all other loops by $v_+$. 
\label{thm-classification-exception}
\end{thm}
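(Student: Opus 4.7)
The plan is to handle the two families of candidates separately. Both candidate ET states are cycles by Proposition \ref{zeromergingcycle}, since $\sigma_1$ has no 0-traces at all, so the 0-merging condition and the constraint on 0-tracing loops are vacuous. The all-$v_+$ class is nontrivial by the argument sketched in Example \ref{sigma1}: $\sigma_1$ is the unique state in its homological degree, so only incoming edge differentials can produce a boundary landing in $C_{\sigma_1}$; evenness of $\sigma_1$ implies it is adequate, hence each incoming $d_e$ acts by $\Delta$, and every term of $\Delta(v_\pm)$ marks at least one loop with $v_-$, ruling out an all-$v_+$ boundary.

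The single-$v_-$ family $\alpha_1^{(L)}$ uses the evenness hypothesis in an essential way, and the main input is a single identity. For a 1-trace $e$ of $\sigma_1$ joining loops $L_1^e, L_2^e$, let $\sigma^e$ be the state obtained from $\sigma_1$ by flipping $e$ to a 0-smoothing, and let $y^e$ denote the ET state of $\sigma^e$ with every loop marked $v_+$. Using $\Delta(v_+) = v_+\otimes v_- + v_- \otimes v_+$ one obtains
\[
d_e(y^e) = \epsilon_e\bigl(\alpha_1^{(L_1^e)} + \alpha_1^{(L_2^e)}\bigr),
\]
where $\epsilon_e \in \{\pm 1\}$ is the Khovanov sign attached to the edge $e$, and crucially both summands carry the same $\epsilon_e$. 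This gives $[\alpha_1^{(L_1^e)}] = -[\alpha_1^{(L_2^e)}]$ in homology; connectedness of $\Gamma_1$, which follows from the nonsplit hypothesis, then propagates this equality along paths, so every $[\alpha_1^{(L)}]$ agrees with any other up to sign, settling the ``up to sign'' portion of the statement.

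For nontriviality, suppose $\alpha_1^{(L)} = \partial x$ and decompose $x = \sum_e x_e$ by source state $\sigma^e$. A quick case analysis on $\Delta$ shows that any ET state of $\sigma^e$ other than $y^e$ produces at least two $v_-$'s in its image under $d_e$: either $v_- \otimes v_-$ appears on $(L_1^e, L_2^e)$ when the merged loop is marked $v_-$, or an off-merged $v_-$ persists while $\Delta(v_+)$ contributes another. Hence if $c_e$ is the coefficient of $y^e$ in $x$, matching the coefficients of $\partial x$ and $\alpha_1^{(L)}$ at every single-$v_-$ ET state of $\sigma_1$ yields the linear system $\sum_{e\ni L'}\epsilon_e c_e = \delta_{L',L}$ indexed by loops $L'$. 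Since $\sigma_1$ is even, $\Gamma_1$ admits a proper 2-coloring $\chi\colon V(\Gamma_1) \to \{\pm 1\}$; weighting the system by $\chi(L')$ and summing gives
\[
\sum_e \epsilon_e c_e\bigl(\chi(L_1^e) + \chi(L_2^e)\bigr) = \chi(L),
\]
whose left side vanishes because $\chi(L_1^e) = -\chi(L_2^e)$ along every edge, while the right side is $\pm 1$, a contradiction. The step I expect to be most delicate is the sign-tracking that ensures both summands of $d_e(y^e)$ really carry the same Khovanov sign, since the entire cancellation hinges on $\epsilon_e$ being a single sign per edge; a secondary check is that the nonsplit hypothesis really does force $\Gamma_1$ to be connected, needed so that the ``up to sign'' equality propagates to all loops.
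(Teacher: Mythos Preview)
Your proof is correct and follows essentially the same route as the paper's: the all-$v_+$ class is handled via Example \ref{sigma1}, the ``up to sign'' equality is propagated along paths in the (connected, since the link is nonsplit) state graph using $d_e(y^e) = \pm(\alpha_1^{(L_1^e)} + \alpha_1^{(L_2^e)})$, and nontriviality comes from the same 2-coloring trick---summing the vertex constraints against a bipartition sign so that each edge contribution cancels. The only cosmetic difference is that you carry the Khovanov edge-signs $\epsilon_e$ explicitly, whereas the paper absorbs them into the coefficients; your worry about $\epsilon_e$ being a single sign per edge is unfounded, since in the Khovanov cube the sign is attached to the edge $(\sigma^e \to \sigma_1)$ itself and not to individual enhanced states, so both summands of $d_e(y^e)$ automatically share it.
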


Before going through the full proof, let's go through a small example.

\begin{figure}[ht!]
\centering
\includegraphics{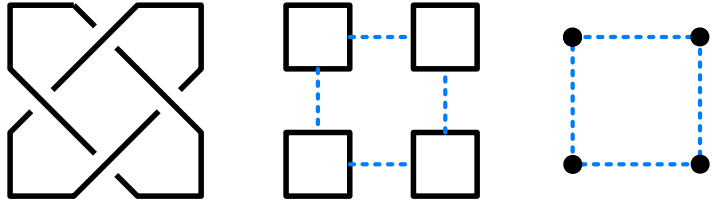}
\caption{On the left side is a diagram for the mirror of Solomon's knot.  In the middle is the all-1 state, and on the right is the all-1 state's state graph.}
\label{figure-solomon-all1}
\end{figure}

\begin{example}
\label{example-solomon}
Consider the mirror of Solomon's knot (which is really a link).  As shown in Figure \ref{figure-solomon-all1}, its all-1 state $\sigma$ is even, with a square state graph.  We already know by Example \ref{sigma1} that marking every loop of this state by $v_+$ gives a state cycle representing a nontrivial homology class.  So, let's number the four loops of the state and let $\alpha_i$ be the enhanced state where loop $L_i$ is marked by $v_-$ and all other loops are marked by $v_+$.  Similarly, number the traces so that trace $T_i$ goes from $L_i$ to $L_{i+1}$, with $T_4$ going between $L_4$ and $L_1$.  Finally, let $\beta_i$ be the enhanced state where trace $T_i$ has been changed from 1 to 0, and all loops are marked by $v_+$.  See Figure \ref{figure-solomon-beta}.

\begin{figure}[ht!]
\centering
\includegraphics{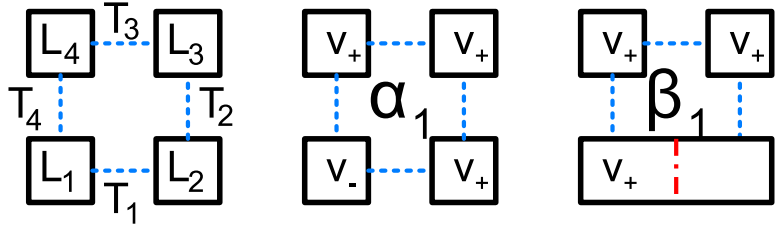}
\caption{On the left side is the numbering convention for loops and traces of the all-1 state.  In the middle is an enhanced state for $\alpha_1$, and on the right is an enhanced state for $\beta_1$.}
\label{figure-solomon-beta}
\end{figure}

This setup was used many times in Section 4 of ~\cite{elliott-qpmod}, and it is easy to verify that $d(\beta_i) = \alpha_i + \alpha_{i+1}$, up to sign.  An alternating sum of such terms creates a telescoping series, so that $\alpha_i \pm \alpha_j = \sum (-1)^{k+1} d(\beta_k)$.  This means that, up to sign, the $\alpha_i$ represent the same homology class.

Now, let's try to show that $\alpha_1$ does not lie in the image of $d$.  By assumption, $\sigma$ is an even state, and hence adequate, so that every trace is a mergetrace.  So, every edge differential entering $C_\sigma$ takes the $\Delta$ form of Table \ref{edge-differential}.  Since there is only a single $v_-$ in $\alpha_1$, there is no chance for a $v_- \otimes v_-$ term to be nontrivially involved in any linear combination resulting in $\alpha_1$.  Thus, we may restrict our attention to the other kind of $\Delta$ output, that of the form $v_+ \otimes v_- + v_- \otimes v_+$, which comes from the image of the $\beta_i$ under the differential.  In particular, $[ \alpha_1 ]$ is trivial if and only if $ \alpha_1$ is some nontrivial linear combination of the $d(\beta_i)$.

Suppose $\alpha_1 = \sum a_i d(\beta_i)$.  Approaching this via linear algebra, we get a simple system of equations, knowing that the coefficient of all $\alpha_i$ other than $i=1$ must be zero:

\begin{align*}
a_1 + a_2 &= 0 \\
a_2 + a_3 &= 0 \\
a_3 + a_4 &= 0 \\
\end{align*}

Consequently, $a_1 + a_4 = 0$.  But, that sum was the coefficient of $\alpha_1$ from the sum, so $\alpha_1$ cannot be realized as such a linear combination. 
 
This works for small cases, but seems to hide the combinatorics of the state:  ideally a proof of nontriviality would exploit this added structure to take advantage of the hypothesis.  One way to take this into account is to associate such a linear combination to a weighted state graph, where the edges of the state graph have weight $a_i$.

Note that $d(\beta_i)$ contributes $a_i$ copies of $\alpha_i$ and $\alpha_{i+1}$.  By interpreting the edge $T_i$ as the $d(\beta_i)$, and the vertices bounding that edge as the associated state cycles $\alpha_i$ and $\alpha_{i+1}$, we can interpret the linear algebra constraint on the coefficients as the condition that the sum of the weights around every vertex other than $L_1$ must be 0.

In our case, every vertex is adjacent to exactly two edges.  So, the weight condition means that the two weights must have the same magnitude and opposite sign.  As we travel around the square, the weights follow this pattern:  alternating in sign, same magnitude.  So, let's pick an edge adjacent to $L_1$ to start a traversal, and propagate this condition on the edge weights until we return to $L_1$.  Starting with $a_1$, the other weights must then be $-a_1, a_1, -a_1$, and the sum of the two weights of the edges around $L_1$ are then $a_1 - a_1=0$.  This same pattern holds any time we have such an even closed paths based at $L_1$.  See Figure \ref{figure-solomon-propagation}.

\begin{figure}[ht!]
\centering
\includegraphics{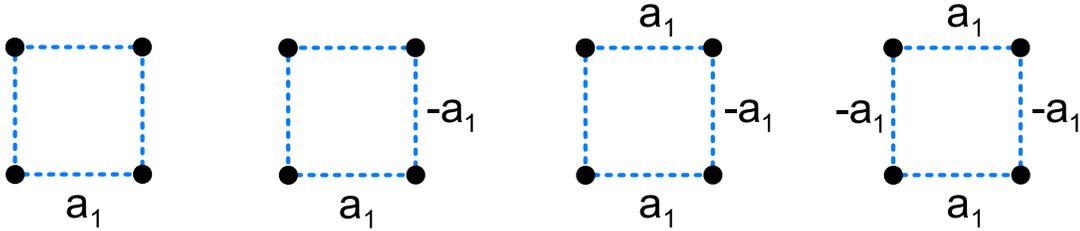}
\caption{From left to right, a propagation of the weight condition over a traversal of a closed path based at $L_1$.}
\label{figure-solomon-propagation}
\end{figure}

The weighted graph interpretation may be overkill for an easy case like this, but the idea is to use this interpretation to take advantage of geometric information in solving the associated linear algebra problem.  The general case involves a more complicated graph, but it turns out the fact that the graph is 2-colorable is enough to force the linear combination to be trivial, with this setup.
\end{example}

So, let's generalize the weighted graph approach of Example \ref{example-solomon} to the general case of Theorem \ref{thm-classification-exception}.

\begin{proof}[Proof of Theorem \ref{thm-classification-exception}]
The first nontrivial homology class associated to this even all-1 state comes from marking every loop by $v_+$.  By the discussion of Example \ref{sigma1}, we know this enhanced state represents a nontrivial homology class.

The new content of this theorem comes in dealing with the enhanced states which mark a single loop by $v_-$ and all other loops by $v_+$.  The first claim is that our choice of loop to mark only changes the sign of the associated homology class.

Label the loops of the traced all-1 state $\sigma$ by $L_1$ to $L_n$, and the traces by $T_1$ to $T_m$.  Let $\alpha_i$ be the enhanced state where loop $L_i$ is marked by $v_-$ and all other loops are marked by $v_+$.  Let $\sigma_j$ be the state where $\sigma$ is modified by changing trace $T_j$ from 1 to 0.  Since $\sigma$ is adequate, $\sigma_j$ will have one loop $M_j$ which was merged by the change of smoothings, and otherwise will match the loop structure of $\sigma$.  Let $\beta_j$ be the enhanced state where $\sigma_j$ is the underlying state and every loop is marked by $v_+$.

Given $i$ and $j$, there is some path of traces from $L_i$ to $L_j$ in $\sigma$ because $\sigma$ is a state of a nonsplit link.  Let this path of traces be $T'_1, \ldots, T'_p$, and trace $T'_h$ join loop $L'_h$ to loop $L'_{h+1}$.  Similarly, let $\alpha'_h$ and $\beta'_h$ be the obvious analogues in this renumbering of loops and traces.  By this setup, $d(\beta'_h) = \alpha'_h + \alpha'_{h+1}$, and we have the telescoping series $\sum (-1)^{h+1} d(\beta'_h) = \alpha'_1 \pm \alpha'_p = \alpha_i \pm \alpha_j$.  So, up to sign any choice of $\alpha_i$ represents the same homology class. 

The meat of the proof is showing that this homology class represented by $\alpha_1$ is nontrivial.  As was the case in Example \ref{example-solomon}, this boils down to showing that no nontrivial linear combination of the $d(\beta_i)$ can equal $\alpha_1$.  To show this, we will reinterpret such a linear combination as a weighted state graph $\Gamma_\sigma$.  If $\alpha_1 = \sum a_i d(\beta_i)$, we will place weight $a_i$ on the edge associated to trace $T_i$.  The sum of weights about the vertex for $L_i$ corresponds to the coefficient of $\alpha_i$ in this linear combination, so the sum of weights about every vertex except $L_1$ must be zero.

Because $\sigma$ is even, the state graph is 2-colorable.  Since the state graph is path-connected, a choice of color on one vertex determines the 2-coloring, so let's use the colors ``+'' and ``-'', and assume that vertex $L_1$ is colored ``-''.  Let $\Sign(L_i)$ denote this color, overloaded to be an actual sign choice.

Now, let's use this sign choice to take an alternating sum of the weights around every vertex.  Let $\Weight(L_i)$ denote the sum of the weights around vertex $L_i$.  We claim that $\sum \limits_{i=1}^{n} \Sign(L_i) \Weight(L_i) = 0$.  To see this, note that if we ignore the sign choice, every edge's weight is counted exactly twice, once for each vertex at the endpoints of the edge.  But, the 2-coloring guarantees that in this weighted sum, the two occurences of weight show up with opposite sign, since if two vertices are joined by an edge in the 2-coloring, they have opposite colors, or opposite signs in this color scheme.

We chose the sign of the $L_i$ vertex to be negative, which means that we can bring it to the other side of this equality to conclude that $\Weight(L_1) = \sum \limits_{i=2}^{n} \Sign(L_i) \Weight(L_i)$.  But, $\Weight(L_i) =0$ for every $i$ other than 1 by construction, so $\Weight(L_1) =0$.  This means that the coefficient of $\alpha_1$ in the original linear combination $\sum a_i d(\beta_i)$ must be zero.  In other words, $\alpha_1$ does not lie in the image of the differential, and represents a nontrivial homology class.
\end{proof}

\section{1-even, 1-isolated state cycles}
\label{sec-1even}

In fact, the proof of Theorem \ref{thm-classification-exception} can be generalized to a larger class of state cycles.  Towards this end, we need to consider yet another subgraph of the state graph.

\begin{defn}
Let $\sigma$ be a state in a diagram.  Let $\Lambda_1$ denote the graph whose vertices correspond to the 1-tracing loops of $\sigma$ (i.e. loops touched by at least one 1-trace).  We say that $\sigma$ is \emph{1-even} if $\Lambda_1$ has only even circuits.  If $\alpha$ is any state cycle based on $\sigma$, we say that $\alpha$ is \emph{1-even} if $\sigma$ is 1-even.
\end{defn}

\begin{remark}
This definition ensures that a 1-even state cycle is based on an adequate state.  The state of any state cycle is 0-merging, per Proposition \ref{zeromergingcycle}, and the 1-even condition tells us that every 1-trace must also be a mergetrace.  So, every trace is a mergetrace, and the state is adequate. 
\end{remark}

\begin{defn}
Let $\alpha$ be a state cycle based on state $\sigma$.  We say that $\alpha$ is \emph{1-isolated} if every connected component of $\Lambda_1$ has at most one loop marked by $v_-$.
\end{defn}

\begin{thm}
\label{thm-1even_1isolated}
Let $\alpha$ be a state cycle which is 1-even and 1-isolated.  Then $\alpha$ represents a nontrivial homology class.
\end{thm}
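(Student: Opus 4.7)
The plan is to generalize the 2-coloring argument from the proof of Theorem \ref{thm-classification-exception} by constructing a $\mathbb{Q}$-linear functional $\phi$ on the chain group $C_\sigma$ satisfying $\phi(\alpha) \neq 0$ and $\phi \circ d = 0$, which certifies $\alpha \notin \im(d)$. The structural inputs are: since $\alpha$ is 1-even, combining Proposition \ref{zeromergingcycle} with the 1-even hypothesis forces every trace of $\sigma$ to be a mergetrace, so $\sigma$ is adequate and every edge differential into $C_\sigma$ has the splitting $\Delta$ form; also, each component $C$ of $\Lambda_1$ is bipartite and admits a 2-coloring $\Sign_C$.

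I would then define $\phi$ componentwise. For each enhanced state $\gamma$ of $\sigma$, set $\phi(\gamma) = 0$ unless (i) $\gamma$ agrees with $\alpha$ on every loop that is not a vertex of $\Lambda_1$, and (ii) each component of $\Lambda_1$ contains at most one $v_-$ loop in $\gamma$. Otherwise, set
\[
\phi(\gamma) = \prod_C s_C(\gamma),
\]
with $s_C(\gamma) = \Sign_C(L)$ when $L$ is the unique $v_-$ loop of $\gamma$ in $C$, and $s_C(\gamma) = 1$ when $\gamma$ has no $v_-$ loop in $C$. The 1-isolated hypothesis ensures $\alpha$ itself satisfies (ii), so $\phi(\alpha) = \pm 1 \neq 0$.

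To verify $\phi \circ d = 0$, it suffices to check that $\phi$ vanishes on the image of each incoming edge differential $d_{T_j}: C_{\sigma_j'} \to C_\sigma$, where $\sigma_j'$ is obtained by swapping the 1-trace $T_j$ (joining loops $L_a^j, L_b^j$ in $\sigma$) for a 0-trace that merges $L_a^j, L_b^j$ into a single loop $M_j$. Let $\beta$ be an enhanced state of $\sigma_j'$. If $\beta$ disagrees with $\alpha$ on any loop outside $\Lambda_1$, then $\phi$ is zero on $d_{T_j}(\beta)$ by condition (i). Otherwise, when $M_j = v_+$ the two terms of $d_{T_j}(\beta)$ coming from $\Delta(v_+) = v_+ \otimes v_- + v_- \otimes v_+$ differ only in which of $L_a^j, L_b^j$ receives the new $v_-$ mark; since $L_a^j$ and $L_b^j$ are joined by an edge in their common component $C$ of $\Lambda_1$, they carry opposite 2-colors, so the two $\phi$-values cancel to $\Sign_C(L_a^j) + \Sign_C(L_b^j) = 0$ (or both vanish if $\beta$ already has a $v_-$ elsewhere in $C$, violating (ii)). When $M_j = v_-$, the single image term has $v_-$ at both $L_a^j$ and $L_b^j$, violating (ii) in $C$, so $\phi = 0$. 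The main subtlety is the factorization of $\phi$ across components of $\Lambda_1$; this works cleanly because $d_{T_j}$ modifies only loops in the component containing $T_j$, leaving the other $s_{C'}$-factors untouched, and the Khovanov sign conventions attached to $d_{T_j}$ are immaterial since we are multiplying a vanishing quantity.
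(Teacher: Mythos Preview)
Your proof is correct and rests on the same key idea as the paper's---the 2-colorability of the bipartite graph $\Lambda_1$ forces the cancellation---but you package it dually: the paper assumes a hypothetical linear combination $\sum a_i \beta_i$ hitting $\alpha$, interprets the $a_i$ as edge weights on $\Lambda_1$, and uses the signed vertex-sum identity to force the coefficient of $\alpha$ to vanish, whereas you encode the 2-coloring directly as a linear functional $\phi$ on $C_\sigma$ and verify $\phi \circ d_{T_j} = 0$ edge by edge. The two arguments are equivalent in content; your formulation has the mild advantage that the case analysis (agreement outside $\Lambda_1$, the $v_-\otimes v_-$ term, extra $v_-$'s already present in the component) is handled uniformly by the vanishing conditions on $\phi$, sidestepping the paper's somewhat informal reduction to ``only the $\beta_i$ can contribute.''
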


\begin{proof}
The idea here is that with this setup, each connected component of $\Lambda_1$ looks exactly like the state graph we considered in Theorem \ref{thm-classification-exception}.  Let the underlying state of $\alpha$ be $\sigma$.  The 1-traces of $\sigma$ are the source of edge differentials that target $C_\sigma$, the chain group in the Khovanov chain complex in which $\alpha$ lies.  The difference here is that since $\sigma$ is no longer the all-1 state, there will be other chain groups of the same height as $C_\sigma$.  This means that, for some 1-trace $T$ of $\sigma$, if we have $d_T(\gamma) = \alpha$, $d(\gamma)$ may also have nonzero image in other chain groups besides $C_\sigma$.

However, our strategy here is to simply restrict our attention to the component of the differential which targets $C_\sigma$, which we will denote $d_\sigma$.  What we will show is that there is no $\gamma$ so that $d_\sigma(\gamma) = \alpha$.  Consequently, there can be no $\gamma$ so that $d(\gamma) = \alpha$, and $[ \alpha ] \neq 0$. 

Given a 1-trace $T$ of $\sigma$, let $\sigma_T$ be the state where the $T$ is changed from a 1-trace to a 0-trace, and all other smoothings remain the same, and let $d_T$ denote the edge differential $d_T: C_{\sigma_T} \longrightarrow C_\sigma$.  Thinking through the definitions, it is easy to see that $d_\sigma = \sum \limits_{\text{$T$ 1-trace}} (-1)^{|T|} d_T$.  Since we are only considering this restricted differential, we can safely ignore the signs in front of these edge differentials:  we can pick a negative basis for $C_{\sigma_T}$ any time the sign for $d_T$ should be negative, and end up with the same result as if we had ignored the sign and chosen the positive basis.  So, we will view $d_\sigma$ as simply $\sum \limits_{\text{$T$ 1-trace}} d_T$.

Because $\alpha$ is 1-even, we know that every 1-trace is a mergetrace (a 1-pinchtrace would give an odd closed path in $\Lambda_1$).  So, we know that every edge differential $d_T$ will take the $\Delta$ form.  In general, $\Delta$ can locally return either $v_- \otimes v_-$ or $v_+ \otimes v_- + v_- \otimes v_+$, where the tensor here corresponds to a 1-trace joining the two loops in $\sigma$.  However, because $\alpha$ is 1-isolated, each connected component of $\Lambda_1$ has at most one loop marked by $v_-$, so that the $v_- \otimes v_-$ form of $\Delta$ cannot be involved in any potential boundary for $\alpha$.  So, the only terms of interest will have the local form $v_+ \otimes v_- + v_- \otimes v_+$.

Label the 1-traces of $\sigma$ by $T_1$ through $T_n$, and let $\beta_i$ denote the enhanced state based on $\sigma_{T_i}$ where the merged loop, and every loop in the connected component of $\Lambda_1$ in which this loop lies, are marked by $v_+$, and every other loop is marked as in $\alpha$.  We have argued that the only possible linear combination whose $d_\sigma$ image can be $\alpha$ must take the form $\sum \limits_{i=1}^n a_i \beta_i$.

Now, we may proceed exactly as in Theorem \ref{thm-classification-exception}, building a weighted graph version of $\Lambda_1$, where the weights of the edges are the $a_i$, and the sum of weights around all but one vertex of every connected component of $\Lambda_1$ must be 0.  Since $\Lambda_1$ is even, it is 2-colorable, and the alternating sum of weights arguments lets us conclude again that the sum of weights around \emph{every} vertex is 0.  Consequently, $\sum \limits_{i=1}^n d_\sigma (a_i \beta_i) = 0$, so that $\alpha$ cannot lie in the image of $d_\sigma$.  It then follows that $[ \alpha ] \neq 0$, as desired.
\end{proof}

\begin{figure}[ht!]
\centering
\includegraphics{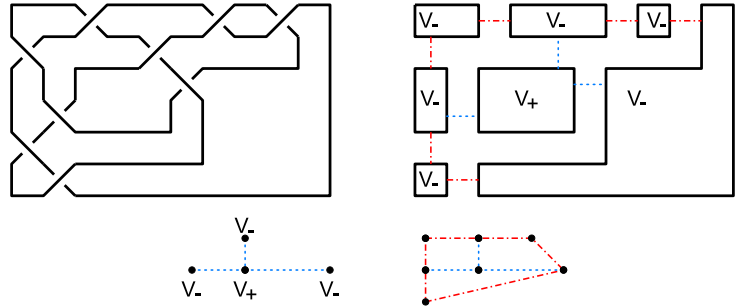}
\caption{Example of a 1-even state cycle which is not 1-isolated, and represents a trivial homology class.  On the top left is the diagram of the knot; on the top right is the state cycle in question.  The bottom right shows the state graph for the state cycle, and the bottom left shows $\Lambda_1$, with the vertices marked as in the state cycle.  Clearly $\Lambda_1$ is 1-even but not 1-isolated.}
\label{figure-example_1even_trivial}
\end{figure}

\begin{example}
\label{example-1even_trivial}
As an example of why the 1-isolated condition is needed, Figure \ref{figure-example_1even_trivial} shows a state cycle which is 1-even, but not 1-isolated.  Calculation shows that the associated homology class is trivial.
\end{example}

This theorem also has some overlap with Theorem 5.6 of ~\cite{elliott-qpmod}, since the Seifert state is even, and hence 1-even.  However, the state cycles of this theorem are not required to be 1-isolated.  We recall the theorem below, and then give an example of a Seifert state satisfying it which is not 1-isolated.

\begin{thm-ref}[~\cite{elliott-qpmod}]
Suppose $\alpha$ is a state cycle associated to the Seifert state, and its quantum grading is $s-1$.  If $\alpha$ has a single loop in its 1-block, and that loop is marked by $v_+$, then $\alpha$ represents a nontrivial homology class.
\end{thm-ref}

\begin{figure}[ht!]
\centering
\includegraphics{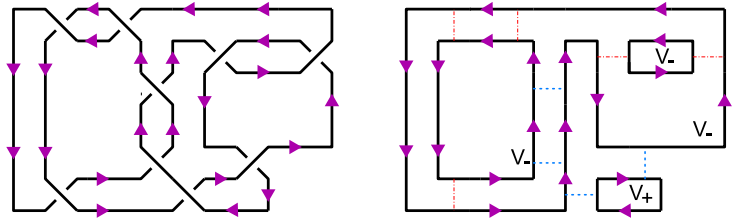}
\caption{An even state cycle satisfying Theorem 5.6 of ~\cite{elliott-qpmod} which is not 1-isolated}
\label{figure-9_42-seifert-oet}
\end{figure}

\begin{example}
\label{example-9_42-seifert}
The $s$ invariant for $9_{42}$ is 0.  For the usual Rolfsen diagram of $9_{42}$, the state cycle associated to the Seifert state has quantum grading $-1 = s-1$ and a single loop in its 1-block, as seen in Figure \ref{figure-9_42-seifert-oet}.  So, by the theorem, this state cycle is nontrivial.  But, while this state is 1-even, it is not 1-isolated: $\Lambda_1$ is connected and contains three vertices, two of which are marked by $v_-$ (the top rightmost ``island'' in the state is the only loop which does not correspond to a vertex in $\Lambda_1$).
\end{example}

As this last comparison shows, there is still a lot of room for improvement in terms of detecting nontriviality of a state cycle via graph analysis.  Here are some more examples of this flavor, showing phenomena left to explain:

\begin{figure}[ht!]
\centering
\includegraphics{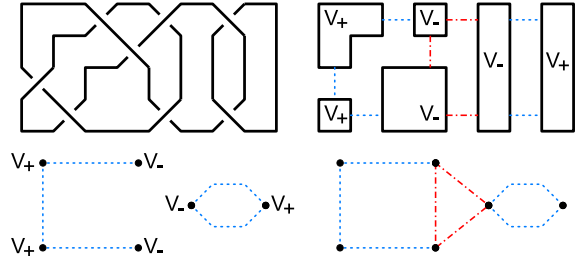}
\caption{Example of a 1-even state cycle which is not 1-isolated, and represents a nontrivial homology class.  On the top left is the diagram of the link; on the top right is the state cycle in question.  The bottom right shows the state graph for the state cycle, and the bottom left shows $\Lambda_1$, with the vertices marked as in the state cycle.  Clearly $\Lambda_1$ is 1-even, but since the lefthand component of $\Lambda_1$ has two loops marked by $v_-$, it is not 1-isolated.}
\label{figure-example_1even_nontrivial}
\end{figure}

\begin{example}
Often 1-even state cycles which are not 1-isolated still represent nontrivial homology classes.  An example is illustrated in Figure 
\ref{figure-example_1even_nontrivial}, which can be shown by direct calculation to represent a nontrivial homology class.  Presumably there is some condition on the 0-tracing loops that can extend this result to a larger class of 1-even state cycles.
\end{example}

\begin{figure}[ht!]
\centering
\includegraphics{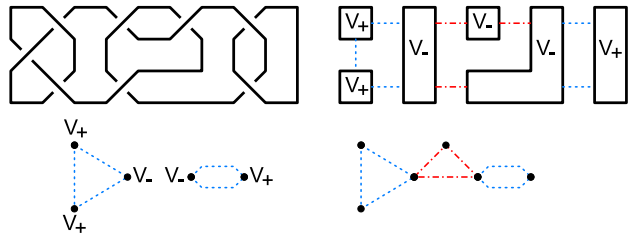}
\caption{Example of a 1-isolated state cycle which is not 1-even, and represents a nontrivial homology class.  On the top left is the diagram of the link; on the top right is the state cycle in question.  The bottom right shows the state graph for the state cycle, and the bottom left shows $\Lambda_1$, with the vertices marked as in the state cycle.  Clearly $\Lambda_1$ is 1-isolated, but since the lefthand component of $\Lambda_1$ has an odd length closed path, it is not 1-even.}
\label{figure-example_1isolated_nontrivial}
\end{figure}

\begin{example}
Similarly, it is possible for a 1-isolated state cycle which is not 1-even to represent a nontrivial homology class.  An example is illustrated in Figure 
\ref{figure-example_1isolated_nontrivial}, which can be shown by direct calculation to represent a nontrivial homology class.
\end{example}

\section{Constructing arbitrarily wide knots}

In this Section, we will describe how to use Theorem \ref{thm-1even_1isolated} to construct (hyperbolic) knots with nontrivial state cycles in arbitrarily many diagonals, and hence arbitrarily large homological width.  Since these knots will be prime, this phenomena is not simply a consequence of Khovanov's connect sum width formula; this technique gives a lower bound on width for some prime knots which quickly grow too large to calculate the full homology.

To do this, we will use knots with adequate, negative, nonalternating diagrams as building blocks.  We will ``connect'' them in some way to a + adequate base knot, in such a way that each copy of the block will give a nontrivial state cycle in a new diagonal, and the all-0 state remains adequate.  For example, 2 copies of the block, together with the base knot, will have 2 nontrivial state cycles from Theorem \ref{thm-1even_1isolated} together with the all-0 state cycle, for a minimum width of 3. 

\subsection{First family of examples}
\label{sub-first-family}

To start with, we will give a family of knots which may not all be hyperbolic, but have increasing width.  In practice, the first few members have turned out to be hyperbolic, but we have no technique to show this property holds for every member.  This problem will be remedied in the next subsection with a more complicated family of knots, based on this family.

For a + adequate base knot, we shall use the knot $8_{21}$, which is nonalternating and + adequate, but H-thin (width 2).  The + adequate diagram we will be using, as well as the associated all-0 state, is shown in Figure \ref{figure-8_21_base}.

\begin{figure}[ht!]
\centering
\includegraphics{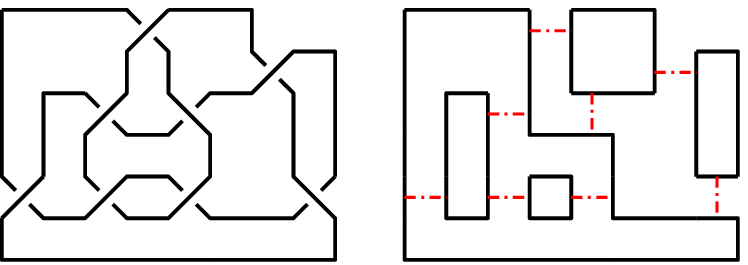}
\caption{A + adequate diagram for $8_{21}$ and its all-0 state.}
\label{figure-8_21_base}
\end{figure}

Our adequate, negative, nonalternating building block will be $10_{152}$.  The diagram we will be using, along with the respective all-0 and all-1 states, are shown in Figure \ref{figure-10_152_base}.  We will be combining copies of this diagram with the base knot in such a way that we can find nice 1-even, 1-isolated state cycles, which lie in different diagonals.

\begin{figure}[ht!]
\centering
\includegraphics{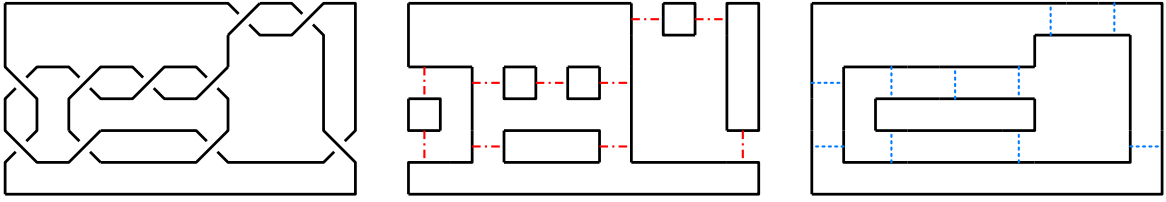}
\caption{An adequate, negative, nonalternating diagram for $10_{152}$, together with its all-0 and all-1 states.}
\label{figure-10_152_base}
\end{figure}

Note first that since this diagram of $10_{152}$ is negative, the all-1 state is its Seifert state, and hence is even.  Since the diagram is adequate, the all-0 and all-1 states are both adequate.  And, since the diagram is adequate and nonalternating, Khovanov (cf. Proposition 7 in ~\cite{khov-patterns}) has shown that the associated homology classes for the all-0 and all-1 states do not lie in adjacent diagonals.  This last property will be key in showing that the state cycles we construct will lie in distinct diagonals.

So, now we need to describe how to combine these pieces.  The idea is to entwine the block with the base knot so that the outer loop of the all-1 state is joined to a single loop of the base knot's all-0 state by positive crossings, in such a way that the diagram ``looks'' prime.  We indicate how we will do this entwining in Figure \ref{figure-entwining_first-family}, and denote such an entwining of $8_{21}$ with $n$ copies of $10_{152}$ by $K_n$.  Figure \ref{figure-entwining-8_21-2} illustrates $K_2$.

\begin{figure}[ht!]
\centering
\includegraphics{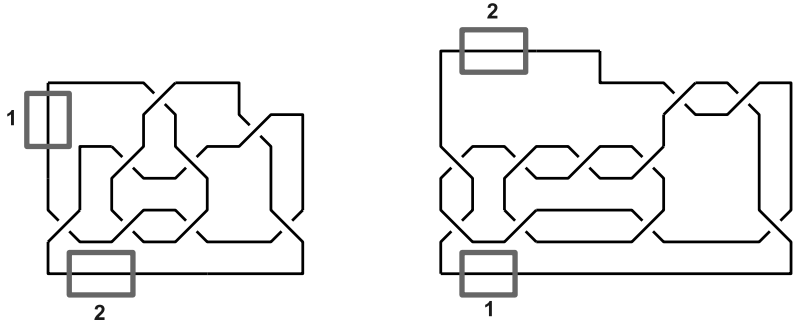}
\caption{Two regions are indicated on the base knot, and the building block knot.  The idea is to join region 1 by two positive half twists, and region 2 by three positive half twists.  See the next figure for what this looks like with two copies of the building block.}
\label{figure-entwining_first-family}
\end{figure}

\begin{figure}[ht!]
\centering
\includegraphics{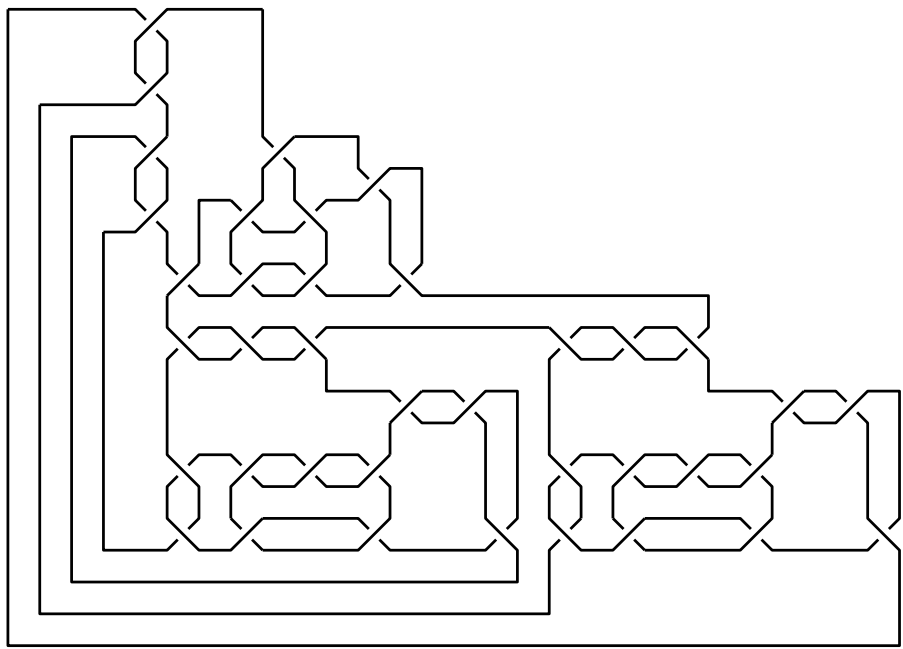}
\caption{The diagram for $K_2$, an entwining of the base knot $8_{21}$ with two copies of $10_{152}$.}
\label{figure-entwining-8_21-2}
\end{figure}

Now we need to find nontrivial state cycles in different diagonals.  Let $\alpha_0$ denote the state cycle where every crossing is 0-smoothed, and every loop is marked by $v_-$.  Pick some ordering of the building block parts in $K_n$, and let $\alpha_k$ be the state cycle where the first k building blocks are 1-smoothed, every other crossing is 0-smoothed, all 0-tracing loops are marked by $v_-$, and all other loops are marked by $v_+$.

\begin{prop}
Suppose $D_1$ is a + adequate diagram of a knot, and $D_2$ is an adequate, negative, nonalternating diagram of a prime knot.  Suppose that $K_n$ denotes one copy of $D_1$ joined to $n$ copies of $D_2$, so that each copy of $D_2$ has a single loop joined to $D_1$ by only positive crossings.  Then, with $\alpha_k$ as above, each $\alpha_k$ for $k=0, \ldots ,n$ is a nontrivial state cycle lying in a different diagonal.
\label{prop-entwining}
\end{prop}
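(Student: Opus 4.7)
The plan has three parts: verify each $\alpha_k$ is a state cycle, establish nontriviality, and show the resulting classes occupy distinct diagonals. The underlying state $\sigma_k$ places $D_1$ and the last $n-k$ copies of $D_2$ in their all-0 state, the first $k$ copies of $D_2$ in their all-1 state, and all joining positive crossings 0-smoothed. Every 0-trace is then a mergetrace: the internal 0-traces of $D_1$ and of the 0-smoothed copies of $D_2$ are merging by + adequacy of $D_1$ and adequacy of $D_2$, while each joining 0-trace runs between a $D_1$-loop and the outer loop of some $D_2$ copy. Since the 0-tracing loops of $\alpha_k$ are marked $v_-$ by definition, Proposition \ref{zeromergingcycle} gives that $\alpha_k$ is a state cycle.

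For $\alpha_0$, $\sigma_0$ sits at minimal homological grading, so no edge differential targets its chain group, yielding $[\alpha_0] \neq 0$ exactly as in Example \ref{sigma0}. For $k \geq 1$, I would apply Theorem \ref{thm-1even_1isolated}. The 1-traces of $\sigma_k$ lie entirely inside the $k$ switched copies of $D_2$, so because each $D_2$ is a connected knot diagram, each connected component of $\Lambda_1$ is the full state graph of one copy's all-1 state. Since $D_2$ is negative, that all-1 state is the Seifert state, which is even by Example \ref{ex-seifert-state}, so $\alpha_k$ is 1-even. Within each such component the outer loop is 0-tracing through the joining crossings (so marked $v_-$) while every other loop lies in the 1-block (so marked $v_+$); hence exactly one $v_-$ per component, giving 1-isolation. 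Theorem \ref{thm-1even_1isolated} then forces $[\alpha_k] \neq 0$.

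To separate the diagonals I would compute $\delta = q - 2t = v_+ - v_- - h + n_+$ region by region. Because the joining positive crossings, when 0-smoothed, only add 0-mergetraces between $D_1$-loops and $D_2$-outer-loops without coalescing them, loop counts split additively across regions. Switching one more $D_2$ copy from all-0 to all-1 changes $v_+ - v_-$ by $\ell_2^0 + \ell_2^1 - 2$ and $h$ by $c_2$, so
\begin{equation*}
\delta(\alpha_{k+1}) - \delta(\alpha_k) = \ell_2^0 + \ell_2^1 - c_2 - 2,
\end{equation*}
where $\ell_2^0, \ell_2^1, c_2$ are the loop counts of the all-0 and all-1 states of $D_2$ and its number of crossings. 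The quantity $\ell_2^0 + \ell_2^1 - c_2$ is precisely the diagonal gap between the standalone all-0 and all-1 state cycles of $D_2$, both nontrivial by Examples \ref{sigma0} and \ref{sigma1}; Proposition 7 of \cite{khov-patterns} applied to the adequate nonalternating diagram $D_2$ says these occupy non-adjacent diagonals, so $|\ell_2^0 + \ell_2^1 - c_2| \geq 4$ and hence the common difference above is nonzero. Thus $k \mapsto \delta(\alpha_k)$ is strictly monotone and the $n+1$ classes lie on $n+1$ distinct diagonals.

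The main obstacle is the bigrading bookkeeping: one has to verify carefully that the joining positive crossings do not merge loops from $D_1$ with loops of $D_2$ into single loops of $K_n$'s state (they add 0-mergetraces in the state graph but do not coalesce the underlying loops), so that the local loop counts really add up as claimed. Once that additivity is in hand, the nonzero-step conclusion follows mechanically from Khovanov's non-adjacency result, and the rest of the argument is an almost direct invocation of Theorem \ref{thm-1even_1isolated}.
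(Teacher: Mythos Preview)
Your approach is essentially the paper's: verify $\alpha_k$ is a state cycle, invoke Theorem~\ref{thm-1even_1isolated} for $k\geq 1$ via the observation that each connected component of $\Lambda_1$ is the Seifert-state graph of a copy of $D_2$, and then compare consecutive diagonals by a local bigrading count. The nontriviality part and the formula $\delta(\alpha_{k+1})-\delta(\alpha_k)=\ell_2^0+\ell_2^1-c_2-2$ are both correct and match the paper.

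There is one genuine slip in the last step. From ``the all-$0$ and all-$1$ state cycles of $D_2$ lie on non-adjacent diagonals'' you infer $|\ell_2^0+\ell_2^1-c_2|\geq 4$. But ``not adjacent'' also allows the two classes to sit on the \emph{same} diagonal, i.e.\ $\ell_2^0+\ell_2^1-c_2=0$, and this is exactly what happens for the paper's building block $D_2=10_{152}$: from the paper's explicit bigradings for $K_1$ one reads off $\delta(\alpha_1)-\delta(\alpha_0)=-2$, so $\ell_2^0+\ell_2^1-c_2=0$, contradicting your stated inequality. The paper instead appeals directly to the Lickorish lemma (Chapter~5 of \cite{lickorish-knottheory}) giving $s_0+s_1<c_2+2$ for an adequate, nonalternating, prime diagram, whence $\ell_2^0+\ell_2^1-c_2-2<0$ and the monotone direction follows. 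Your conclusion is in fact salvageable from non-adjacency alone, since the only forbidden value is $\ell_2^0+\ell_2^1-c_2=2$; but the route through $|\cdot|\geq 4$ is not valid, and you should cite Lickorish (which is where the ``prime'' hypothesis on $D_2$ is actually used) rather than Khovanov's Proposition~7 at this point.
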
 

\begin{proof}
If $k=0$, then $\alpha_0$ is simply the all-0 state of a + adequate diagram, and hence represents a nontrivial homology class per Example \ref{figure-example_sigma0}.

So, suppose $k > 0$.  The connected components of $\Lambda_1$ for $\alpha_k$ consist of all the loops of each copy of $D_2$ which was 1-smoothed:  the only loops in the 1-block come from these copies, and each connected component of the 1-block is connected to only one other loop by 1-traces: the loop from $D_2$ which is joined to $D_1$ by positive crossings (0-smoothed in $\alpha_k$).  Each such component is even because the all-1 state of a negative diagram is its Seifert state.  Therefore, $\alpha_k$ is 1-even.

The single loop of each 1-smoothed copy  of $D_2$ which is joined to $D_1$ by positive crossings is now 0-tracing in $\alpha_k$, and so each connected component of $\Lambda_1$ has only a single loop marked by $v_-$.  So, $\alpha_k$ is also 1-isolated, and hence represents a nontrivial homology class by Theorem \ref{thm-1even_1isolated}.

The reason each $\alpha_k$ lies in a different diagonal is roughly the same as Khovanov's proof that knots with adequate, nonalternating diagrams are H-thick (Proposition 7 of ~\cite{khov-patterns}).  To prove this explicitly, we will show that $\alpha_k$ lies in a diagonal below $\alpha_{k-1}$.

The only difference between $\alpha_k$ and $\alpha_{k-1}$ is that the $k$-th copy of $D_2$ is 1-smoothed in $\alpha_k$, and 0-smoothed in $\alpha_{k-1}$.  So, it suffices to check how the relative diagonal gradings of these two parts compare.  Furthermore, the writhe parts of the homological and quantum gradings for these two parts will remain the same, so we can restrict our attention to the relative homological and quantum gradings, to see how their relative diagonal gradings compare.

To fix notation, let $\sigma_0$ be the state cycle for the all-0 state of $D_2$, with all loops marked by $v_-$.  Let $\sigma_1$ be the state cycle for the all-1 state of $D_2$, where the loop joined to $D_1$ in $K_n$ is marked by $v_-$, and all other loops are marked by $v_+$.  Let $s_0, s_1$ denote the number of loops in $\sigma_0, \sigma_1$, and $m$ be the number of crossings in $D_2$.

Since we are ignoring the $n_+, n_-$ parts of the gradings, the relative homological grading will simply be the number of 1-smoothings, so that $t(\sigma_0) = 0, t(\sigma_1) = m$.  The relative quantum gradings will be given by $q(\sigma_0) = -s_0 + 0 = -s_0$, and $q(\sigma_1) = (s_1 - 1) - 1 + m = s_1 + m - 2$.  The (relative diagonal) $\delta$ gradings are $2t - q$, so $\delta(\sigma_0) = 2* 0 - (-s_0) = s_0$, and $\delta(\sigma_1) = 2m - (s_1 + m -2) = m - s_1  + 2$.  So, for these two state cycles to lie on the same diagonal, we would have to have $s_0 = m - s_1 + 2$, i.e. $s_0 + s_1 = m+2$.

However, because $D_2$ is an adequate, nonalternating diagram of a prime knot, a lemma from Chapter 5 of ~\cite{lickorish-knottheory} states that $s_0 + s_1 < m+2$.  So, $\sigma_0$ and $\sigma_1$ have different relative $\delta$ gradings, and must lie on different diagonals:  the inequality guarantees that $\sigma_0$ lies in a diagonal lower than $\sigma_1$.  This same difference of $\delta$ gradings holds for $\alpha_k$ and $\alpha_{k-1}$, so these also lie on different diagonals, with $\alpha_{k-1}$ lying below $\alpha_k$.  It follows that each $\alpha_k$ lies on its own diagonal, as claimed.  
\end{proof}

\begin{cor}
For such a family of knots $K_n$, the homological width of $K_n$ is at least $n+1$.  In particular, this holds when $D_1 = 8_{21}, D_2 = 10_{152}$.
\end{cor}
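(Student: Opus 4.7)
The plan is to read this off directly from Proposition \ref{prop-entwining}, together with the definition of homological width as the number of diagonals carrying nontrivial Khovanov homology. First I would invoke Proposition \ref{prop-entwining} to produce the $n+1$ state cycles $\alpha_0, \alpha_1, \ldots, \alpha_n$ on the diagram $K_n$, noting that the proposition guarantees each $\alpha_k$ is a \emph{nontrivial} homology class and that any two distinct $\alpha_k$ lie on distinct diagonals. Since each $\alpha_k$ is a single enhanced state, its bigrading $(t,q)$ is well-defined, and so the $\delta = 2t - q$ values computed in the proposition really do pick out $n+1$ distinct diagonals of $Kh(K_n)$ with nontrivial homology. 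By the definition of $w_{Kh}$ recalled in the introduction, this forces $w_{Kh}(K_n) \geq n+1$.

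Next I would verify the hypotheses of Proposition \ref{prop-entwining} in the stated special case. The diagram of $8_{21}$ in Figure \ref{figure-8_21_base} is + adequate by inspection of its all-0 state, and the diagram of $10_{152}$ in Figure \ref{figure-10_152_base} is adequate, negative, nonalternating, and represents a prime knot (a standard fact for a $10$-crossing prime knot). The entwining described in Section \ref{sub-first-family} attaches each copy of $D_2 = 10_{152}$ to $D_1 = 8_{21}$ through positive crossings at the indicated outer loop, so the hypothesis that each copy of $D_2$ is joined to $D_1$ only by positive crossings is visibly met.

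The only thing that requires a brief comment is that no cancellation of homology classes can occur when we exhibit one representative per diagonal: because the $\alpha_k$ lie in pairwise distinct diagonals, there is no linear combination of them that could be a boundary without each $[\alpha_k]$ being individually a boundary, and Proposition \ref{prop-entwining} rules that out. The main (and really the only) obstacle is just confirming primeness and adequacy of the building blocks in the special case; everything else follows mechanically from the previous proposition and the definition of $w_{Kh}$.
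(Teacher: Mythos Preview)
Your proposal is correct and matches the paper's approach: the corollary is stated there with no explicit proof, since it is immediate from Proposition~\ref{prop-entwining} and the definition of width. Your verification of the hypotheses for $D_1 = 8_{21}$, $D_2 = 10_{152}$ and your remark on cancellation are fine but more than the paper bothers to spell out; the final paragraph is unnecessary, as Proposition~\ref{prop-entwining} already asserts each $[\alpha_k]$ is individually nontrivial, so each of the $n+1$ diagonals carries nonzero homology without any further argument.
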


\begin{figure}[ht!]
\centering
\includegraphics{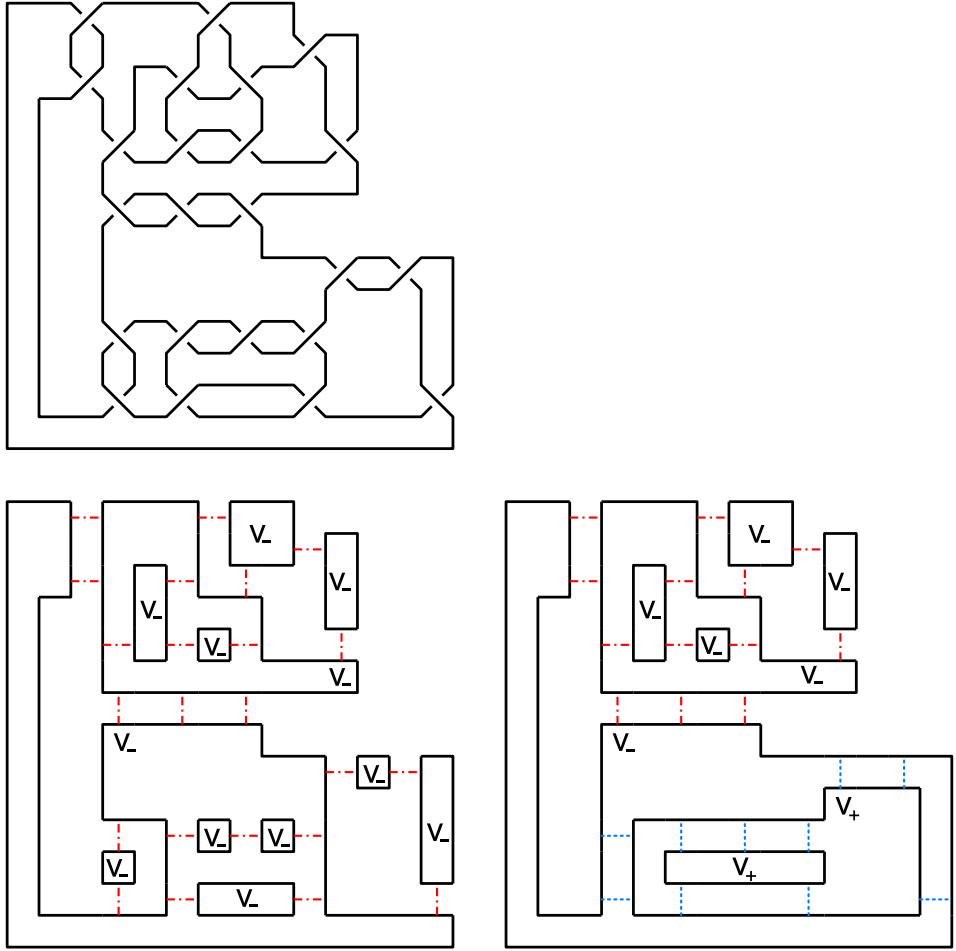}
\caption{On top is the diagram for $K_1$, an entwining of the base knot $8_{21}$ with one copy of $10_{152}$.  On the bottom, from left to right we have the state cycles for $\alpha_0, \alpha_1$.}
\label{figure-entwining-8_21-1}
\end{figure}

Unfortunately, this lower bound is not always reflect the full width of a knot; we can see this in the case of $K_1$, whose width turns out to be 4, instead of the 2 this bound gives.  Figure \ref{figure-entwining-8_21-1} shows the diagram for this knot, as well as $\alpha_0$ and $\alpha_1$.  The diagram has 7 positive crossings and 16 negative crossings, and one can check that the bigradings of $\alpha_0, \alpha_1$ are $(-16,-37)$ and $(-6,-19)$, and the $\delta$ gradings are 5 and 7.  Using Morrison's JavaKh-v2 ~\cite{fgmw-spc4}, we calculated the Khovanov homology, and organized the ranks into Table \ref{table-K_1-homology}.  The table makes it clear that the homological width of $K_1$ is 4; the top two diagonals are those from $\alpha_0, \alpha_1$.  We attempted to check this for $K_2$, but the calculation ran out of memory on the 4 GB desktops we were working with.  We expect the lower bound again to be lower than the actual width for this larger case.

\begin{table}[htbp]
\tiny
\begin{center}
\setlength\arrayrulewidth{1pt}
\setlength\doublerulesep{1pt}
\begin{tabular}{|q||q|q|q|q|q|q|q|q|q|q|q|q|q|q|q|q|q|q|q|q|q|q|} \hline
 & -16 & -15 & -14 & -13 & -12 & -11 & -10 & -9 & -8 &  -7 & -6 & -5 & -4 & -3 & -2 & -1 & \phantom{-}0 & \phantom{-}1 & 2 & 3 & 4 & 5 \\ \hline\hline
\phantom{-0}3 	&  &   	 &   	 &  	 &  	 &  	 &  &	&	&	&	&	&	&	&	&	&	&	&	&{\cy}	&{\cb}	&  {\cy}1 \\ \hline
\phantom{-0}1 	&  &   	 &   	 &  	 &  	 &  	 &  &	&	&	&	&	&	&	&	&	&	&	&{\cy}	& {\cb}1	&	{\cy}& {\cb}  \\ \hline
\phantom{0}-1 	&  &   	 &   	 &  	 &  	 &  	 &  &	&	&	&	&	&	&	&	&	&	&{\cy}	& {\cb}5	& {\cy}1	& {\cb}1	&   \\ \hline
\phantom{0}-3 	&  &   	 &   	 &  	 &  	 &  	 &  &	&	&	&	&	&	&	&	&	&	{\cy}& {\cb}20	& {\cy}2	& {\cb}	& 	&   \\ \hline
\phantom{0}-5 	&  &   	 &   	 &  	 &  	 &  	 &  &	&	&	&	&	&	&	&	&{\cy}	& {\cb}47	& {\cy}5	& {\cb}1	& 	& 	&   \\ \hline
\phantom{0}-7 	&  &   	 &   	 &  	 &  	 &  	 &  &	&	&	&	&	&	&	&{\cy}	& {\cb}86	& {\cy}22	& {\cb}1	& 	& 	& 	&   \\ \hline
\phantom{0}-9 	&  &   	 &   	 &  	 &  	 &  	 &  &	&	&	&	&	&	& {\cy}	& {\cb}136	& {\cy}46	& {\cb}	& 	& 	& 	& 	&   \\ \hline
-11 	&  &   	 &   	 &  	 &  	 &  	 &  &	&	&	&	&	&	{\cy} & {\cb}174	& {\cy}86	& {\cb}1	& 	& 	& 	& 	& 	&   \\ \hline
-13 	&  &   	 &   	 &  	 &  	 &  	 &  &	&	&	&	&{\cy}	& {\cb}204	& {\cy}136	& {\cb}	& 	& 	& 	& 	& 	& 	&   \\ \hline
-15 	&  &   	 &   	 &  	 &  	 &  	 &  &	&	&	&{\cy}	& {\cb}204	& {\cy}174	& {\cb}	& 	& 	& 	& 	& 	& 	& 	&   \\ \hline
-17 	&  &   	 &   	 &  	 &  	 &  	 &  &	&  & {\cy}1	& {\cb}181	& {\cy}204	& {\cb}	& 	& 	& 	& 	& 	& 	& 	& 	&   \\ \hline
-19 	&  &   	 &   	 &  	 &  	 &  	 &  &	& {\cy}3 & {\cb}143	& {\cy}204	& {\cb}	& 	& 	& 	& 	& 	& 	& 	& 	& 	&   \\ \hline
-21 	&  &   	 &   	 &  	 &  	 &  	 &  & {\cy}4	& {\cb}94 & {\cy}181	& {\cb}	& 	& 	& 	& 	& 	& 	& 	& 	& 	& 	&   \\ \hline
-23 	&  &   	 &   	 &  	 &  	 &  	 &  {\cy}5 & {\cb}54	& {\cy}143 & {\cb}	& 	& 	& 	& 	& 	& 	& 	& 	& 	& 	& 	&   \\ \hline
-25 	&  &   	 &   	 &  	 &  	 &  	{\cy}6 &  {\cb}25 & {\cy}93	& {\cb} & 	& 	& 	& 	& 	& 	& 	& 	& 	& 	& 	& 	&   \\ \hline
-27 	&  &   	 &   	 &  	 &  {\cy}5	 &  	{\cb}9 &  {\cy}51 & {\cb}	&  & 	& 	& 	& 	& 	& 	& 	& 	& 	& 	& 	& 	&   \\ \hline
-29 	&  &   	 &   	 &  {\cy}4	 &  {\cb}6	 &  	{\cy}21 & {\cb}  & 	&  & 	& 	& 	& 	& 	& 	& 	& 	& 	& 	& 	& 	&   \\ \hline
-31 	&  &   	 &   {\cy}3	 &  {\cb}5	 &  {\cy}4	 &  {\cb}	 &   & 	&  & 	& 	& 	& 	& 	& 	& 	& 	& 	& 	& 	& 	&   \\ \hline
-33 	&  & {\cy}  1	 &   {\cb}4	 &  {\cy}	 &  {\cb}	 &  	 &   & 	&  & 	& 	& 	& 	& 	& 	& 	& 	& 	& 	& 	& 	&   \\ \hline
-35 	& {\cy} &   {\cb}3	 & {\cy}  	 & {\cb} 	 &  	 &  	 &   & 	&  & 	& 	& 	& 	& 	& 	& 	& 	& 	& 	& 	& 	&   \\ \hline
-37 	&  {\cb}1 &  {\cy} 	 &   {\cb}	 &  	 &  	 &  	 &   & 	&  & 	& 	& 	& 	& 	& 	& 	& 	& 	& 	& 	& 	&   \\ \hline
\end{tabular}
\end{center}
\caption{Rational Khovanov homology of $K_1$, with the four diagonals shaded.}
\label{table-K_1-homology}
\end{table}

We suspect that the $K_n$ in this section are prime, and probably hyperbolic.  But, we did not find a recursive prime tangle decomposition for the knots, and while we were able to check that $K_0, K_1, K_2$ are hyperbolic using SnapPea ~\cite{snappea}, we do not know any nice theorem to show this holds in general.

\subsection{A hyperbolic family}
\label{sub-hyperbolic-family}

In order to guarantee that we have a family of knots with every member hyperbolic, we will modify the construction of \ref{sub-first-family} so that we may apply a theorem of Futer and Purcell.  To state their theorem, we will first need to review some terminology.  For more details, we refer the reader to ~\cite{lackenby-halc, futer-purcell-2007}.

One can view a diagram $D$ as a 4-valent planar graph $\Gamma_4(D)$, where the vertices are colored by over and under crossing information.  A \emph{twist region} of $\Gamma_4(D)$ is a maximum string of bigons strung together, vertex to vertex; one also allows an empty collection of bigons for a twist region, if a crossing is adjacent to no bigons.  Intuitively, a twist region is just a section of the diagram where two strands are tied into a series of half-twists.

A diagram $D$ of a link is called \emph{prime} if, for any simple curve $\gamma$ that intersects $\Gamma_4(D)$ transversely in two points, $\gamma$ bounds a subdiagram containing no crossings.  Roughly, this means the diagram is prime if it is not visibly a connect sum.

A diagram $D$ is \emph{twist-reduced} if, whenever a
simple closed curve $\gamma$ intersects the graph $\Gamma_4(D)$ transversely
in four points, with two points adjacent to one crossing and
the other two points adjacent to another crossing, then $\gamma$ bounds a twist region.  See Figure \ref{figure-example-twist_reduced} for examples and nonexamples of such curves $\gamma$.  Given a $\gamma$ from the definition, this rules out the case that one strand might be knotted, or knotted and linked with only the other strand contained in the curve.

\begin{figure}[ht!]
\centering
\includegraphics{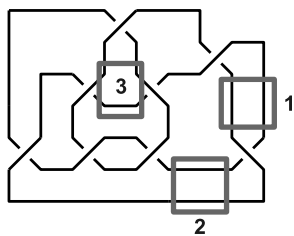}
\caption{Examples and nonexamples of valid curves $\gamma$ for the twist-reduced definition.  Curve 1 is valid, with the top two intersection points near one crossing, and the bottom two near a second.  Curve 2 is not valid; the two intersection points on the right are adjacent to a crossing, but the pair on the left is not adjacent to a single crossing.  Curve 3 is also not valid; the pair of points on the right and left are each adjacent to different crossings, but the curve intersects the diagram transversely in six points, not four.  One can verify that this diagram is in fact twist-reduced.}
\label{figure-example-twist_reduced}
\end{figure}

With these definitions in mind, the theorem we wish to invoke is the following:

\begin{thm}[~\cite{futer-purcell-2007}]
Let $L \in S^3$ be a link with a prime, twist-reduced diagram $D$.
Assume that $D$ has at least two twist regions.  If every twist region of $D$ contains at least 6 crossings, then $L$ is hyperbolic.
\label{thm-fp}
\end{thm}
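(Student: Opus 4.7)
The plan is to deduce hyperbolicity of $L$ from that of an associated \emph{augmented link} $L^*$, and then apply the $6$-Theorem of Agol--Lackenby to control the Dehn filling that recovers $L$ from $L^*$. This is the standard route for proving hyperbolicity results of this flavor, and the three hypotheses on $D$ will feed into its three main steps.

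First I would construct $L^*$: for each twist region $R_i$ of $D$ with $n_i \geq 6$ crossings, introduce an unknotted crossing circle $C_i$ encircling the two strands of $R_i$, and simultaneously reduce $R_i$ to either $0$ or $1$ crossings, according to the parity of $n_i$. The original link $L$ is then obtained from $L^*$ by a $1/m_i$ Dehn filling of each $C_i$, where $m_i$ records the number of removed full twists and satisfies $|m_i| \geq 3$. Second, I would verify that $S^3 \setminus L^*$ is hyperbolic by exhibiting the standard checkerboard ideal polyhedral decomposition of a fully augmented link and building a right-angled hyperbolic structure on it via the Koebe--Andreev--Thurston theorem. The three hypotheses on $D$ enter here cleanly: primeness of $D$ rules out visible connect-sum reductions of $L^*$, twist-reducedness prevents essential $4$-punctured spheres that would obstruct the polyhedral decomposition, and the presence of at least two twist regions ensures $L^*$ has enough crossing circles that this decomposition is genuinely hyperbolic rather than degenerating to a torus or Seifert-fibered piece.

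Third, I would estimate the length of the Dehn surgery slope on each cusp of $C_i$. In the maximal cusp neighborhood of a crossing circle in a fully augmented link the cusp torus carries a standard rectangular flat geometry, and the slope arising from $m_i \geq 3$ full twists can be shown to have geodesic length strictly greater than $6$. Finally, invoking the $6$-Theorem, the simultaneous Dehn filling of $S^3 \setminus L^*$ along these slopes preserves hyperbolicity, yielding that $S^3 \setminus L$ is hyperbolic. The main obstacle is the quantitative slope length bound in step three: it requires explicit control on the maximal horoball packing of $S^3 \setminus L^*$ rather than just an abstract existence statement, and calibrating the threshold so that exactly $n_i \geq 6$ suffices is the technical heart of the Futer--Purcell argument.
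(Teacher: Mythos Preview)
The paper does not give its own proof of this statement: Theorem~\ref{thm-fp} is quoted from Futer--Purcell~\cite{futer-purcell-2007} and used as a black box, so there is no in-paper argument to compare against. That said, your sketch accurately captures the structure of the actual Futer--Purcell proof, and the paper itself alludes to exactly this mechanism a few paragraphs later when it describes the ``alternate approach which works directly from the augmented links'': one passes to the fully augmented link, verifies hyperbolicity via a circle-packing/Andreev-type argument (what the paper calls ``a triangulation of $S^2$ \ldots which satisfies Andreev's theorem''), and then recovers the original link by Dehn filling, using the slope-length estimate together with the $6$-Theorem to preserve hyperbolicity. So your outline is both correct and consistent with the hints the paper provides; there is simply nothing further in this paper to check it against.
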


In other words, we are going to add a bunch of extra twists to each twist region of the base knot $D_1$ and the building block $D_2$, and combine them into diagrams which are twist-reduced and prime, to conclude that the knots in question are hyperbolic (and hence prime).  So, let's do this for the case when $D_1 = 8_{21}$ and $D_2 = 10_{152}$.  Figure \ref{figure-twist_regions} has each twist region of these diagrams circled; we will then make new diagrams $D_1', D_2'$ which have at least six crossings in each twist region, as indicated in Figure \ref{figure-entwining_family-two}.

\begin{figure}[ht!]
\centering
\includegraphics{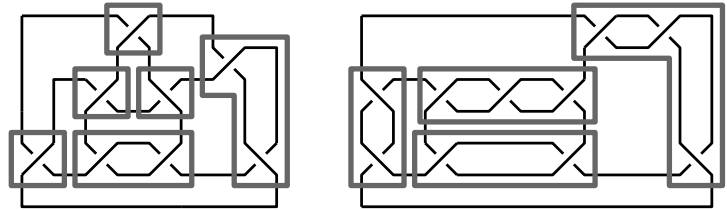}
\caption{Diagrams for $8_{21}, 10_{152}$ with twist regions indicated.  One can check that both diagrams are twist-reduced.}
\label{figure-twist_regions}
\end{figure}

One can verify that the original diagrams for $D_1$ and $D_2$ as given in Figure \ref{figure-twist_regions} are twist-reduced, as the only valid $\gamma$ from the twist-reduced definition occur in these twist regions; adding extra twists within
each twist region does not change this property, so $D_1, D_2$ are also twist-reduced.

\begin{figure}[ht!]
\centering
\includegraphics{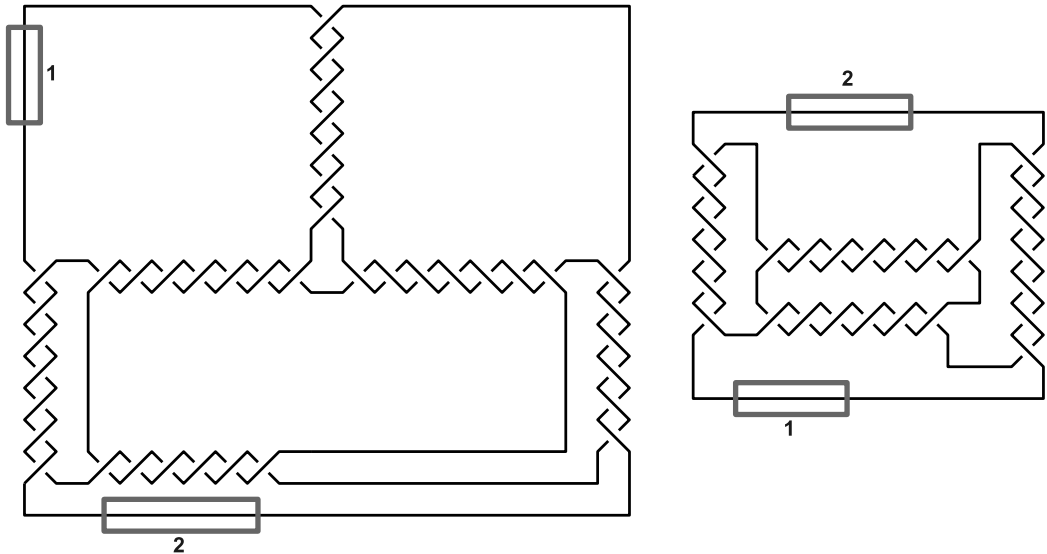}
\caption{Diagrams for $D_1', D_2'$, where each twist region from Figure \ref{figure-twist_regions} has been replaced by six or seven crossings, depending on the parity crossings in the original twist regions.  Two regions have been marked on each diagram to indicate how to combine the two into $K_n'$:  the pair of strands marked for region 1 will be joined by six positive crossings, while the pair of strands marked for region 2 will be joined by seven positive crossings.}
\label{figure-entwining_family-two}
\end{figure}

Going from the twist-reduced definition, the entwining pattern as shown in Figure \ref{figure-entwining_family-two} will introduce two new twist regions between each copy of $D_2$ in $K_n'$; one can check that this entwining pattern will always result in twist-reduced and prime diagrams.  By construction these diagrams have more than two twist regions, and each twist region has at least 6 crossings in it, so Theorem \ref{thm-fp} applies, and the resulting $K_n'$ are hyperbolic, as claimed.

If one is worried about checking all the valid curves to show these knots are twist-reduced, there is an alternate approach which works directly from the augmented links.  The idea is to come up with a triangulation of $S^2$ based on the augmented link which satisfies Andreev's theorem (Theorem 6.2 of ~\cite{purcell-cusps}).  With such a triangulation, Purcell shows the augmented link is hyperbolic, at which point one can follow the rest of the proof that the original knot is hyperbolic from ~\cite{futer-purcell-2007}.

The \emph{augmented link} of a diagram comes by replacing each twist region of the diagram by a surgery circle around the two strands being twisted in the twist region. The triangulation in question is formed by adding a vertex for each region in the diagram of the augmented link, where such a region is marked off by arcs of the diagram or a surgery circle in the augmented link.  We then add an edge between two vertices if they share a common arc in the diagram, or along a surgery circle if one spans the two regions.  The triangulations for the augmented links for $D_1, D_2$ are shown in Figure \ref{figure-8_21_10_152-twist-reduced}.

\begin{figure}[ht!]
\centering
\includegraphics{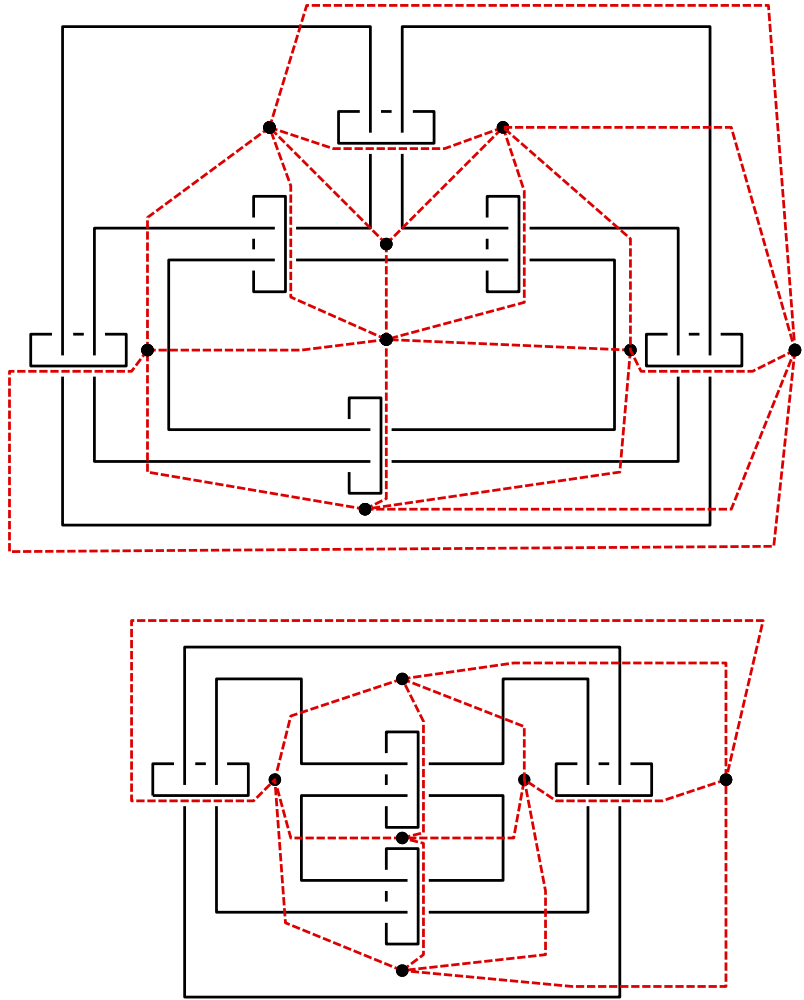}
\caption{Augmented link diagrams for $8_{21}, 10_{152}$, with the associated triangulation marked in red dashed lines.  Because each edge has exactly two vertices, and there are no two vertices joined by more than one edge, these two triangulations satisfy Andreev's theorem.  So, by Purcell's argument, the augmented links are hyperbolic.}
\label{figure-8_21_10_152-twist-reduced}
\end{figure}

Given such a triangulation, the two conditions we need to apply Andreev's theorem are:
\begin{itemize}
\item Each edge has distinct ends.
\item No two vertices are joined by more than one edge.
\end{itemize}
In other words, the constructed graph provides an honest triangulation of $S^2$.  Glancing back at Figure \ref{figure-8_21_10_152-twist-reduced}, one can easily check that these two conditions hold for the triangulations of the augmented links for $D_1', D_2'$.

One then must check that such a triangulation continues to exist when we start piecing together the knots of our family.  A schematic for the augmented link diagram for $K_n'$ is given in Figure \ref{figure-family_two-twist-reduced}.  Reading the schematic, we have one copy of $D_1'$ and $n$ copies of $D_2'$ being glued together in a proscribed fashion.  The ending vertices of edges which go between the copies of $D_1'$ and $D_2'$ are the $A(i), B(i), C(i), D(i), E(i), F, G$.  One can verify that the new edges being added in this gluing still yield triangulations which satisfy Andreev's theorem, so that the augmented link for $K_n'$ is hyperbolic, as desired.


\begin{figure}[ht!]
\centering
\includegraphics{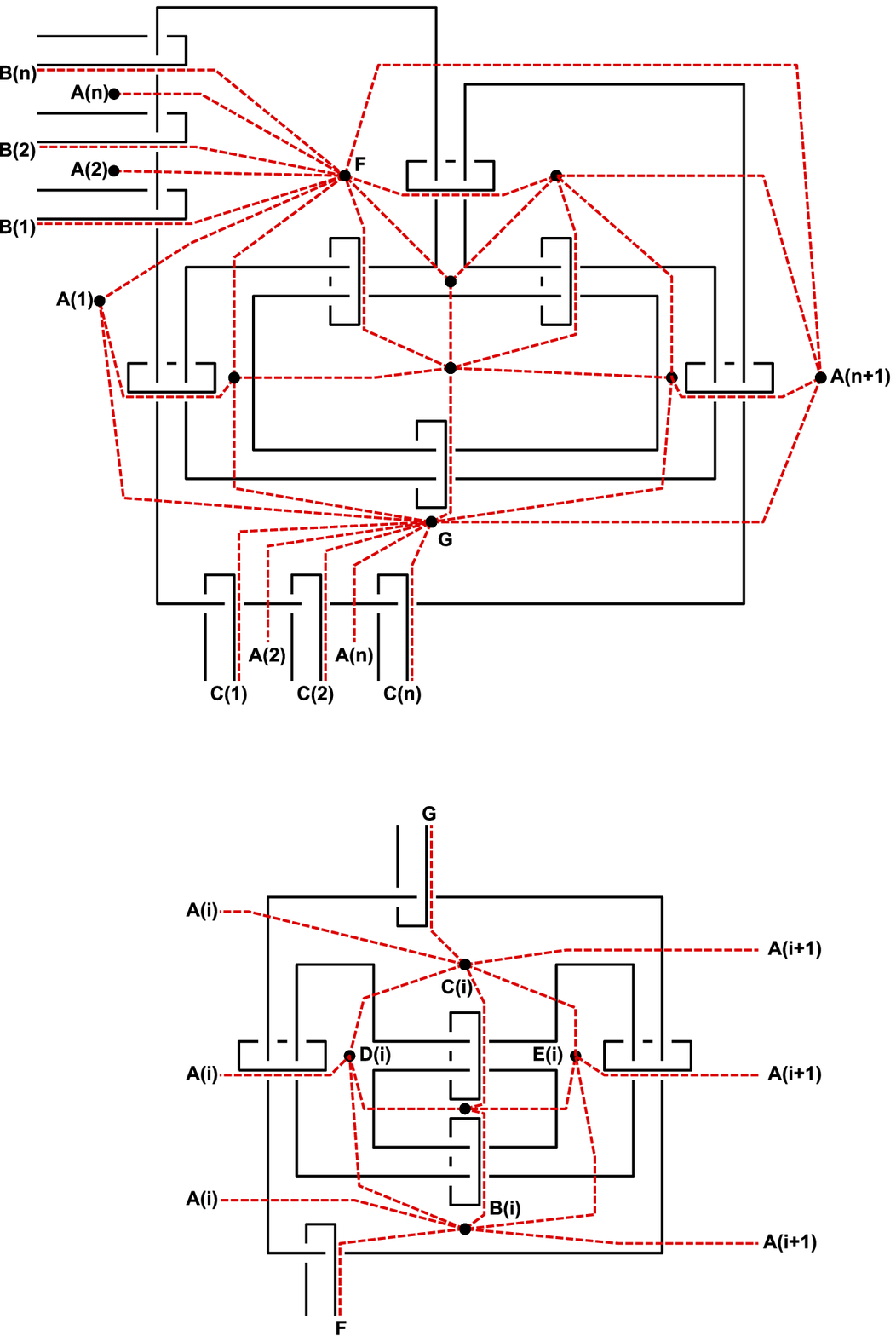}
\caption{Schematics for the augmented link diagram of $K_n'$.  On the top is the augmentation for $D_1'$, while on the bottom is an augmentation schematic for the $i$-th copy of $D_2'$.  Dotted edges which go between the two diagrams are labelled as to their ending vertices.}
\label{figure-family_two-twist-reduced}
\end{figure}

Because we simply duplicated the twist type for each twist region, and maintained the parity of the number of half twists, $D_1'$ remains + adequate, and $D_2'$ remains nonalternating, adequate, and negative.  So, the entwining pattern satisfies Proposition \ref{prop-entwining}, and each $K_n'$ contains at least $n+1$ nontrivial state cycles, each in its own diagonal.  Thus, this family of examples demonstrates how to produce hyperbolic knots of arbitrarily large homological width.
\clearpage

\bibliographystyle{hep}
\bibliography{knotbib}

\newcommand{\etalchar}[1]{$^{#1}$}
\begin{thebibliography}{FGMW09}

\bibitem[AP04]{ap-torsion}
M.~M. Asaeda and J.~H. Przytycki,
\newblock Khovanov homology: torsion and thickness, 2004.

\bibitem[BN02]{bn-khov-1}
D.~Bar-Natan, \textsl{ On {K}hovanov's categorification of the {J}ones
  polynomial},
\newblock Algebr. Geom. Topol. \textbf{ 2}, 337--370 (electronic) (2002),
  {math.QA/0201043}.

\bibitem[CK06]{ck-spanning-tree}
A.~Champanerkar and I.~Kofman,
\newblock Spanning trees and Khovanov homology, 2006.

\bibitem[CKS07]{CKS-graphs-and-Kh}
A.~Champanerkar, I.~Kofman and N.~Stoltzfus, \textsl{ Graphs on surfaces and
  {K}hovanov homology},
\newblock Algebr. Geom. Topol. \textbf{ 7}, 1531--1540 (2007),
  {math.GT/0705.3453}.

\bibitem[DFK{\etalchar{+}}08]{dasbach-jones_on_graph}
O.~T. Dasbach, D.~Futer, E.~Kalfagianni, X.-S. Lin and N.~W. Stoltzfus,
\newblock The Jones polynomial and graphs on surfaces, 2008.

\bibitem[Ell09]{elliott-qpmod}
A.~Elliott,
\newblock State Cycles, Quasipositive Modification, and Constructing H-thick
  Knots in Khovanov Homology, 2009.

\bibitem[FGMW09]{fgmw-spc4}
M.~Freedman, R.~Gompf, S.~Morrison and K.~Walker,
\newblock Man and machine thinking about the smooth 4-dimensional Poincaré
  conjecture, 2009.

\bibitem[FP07]{futer-purcell-2007}
D.~Futer and J.~S. Purcell, \textsl{ Links with no exceptional surgeries},
\newblock Commentarii Mathematicu Helvetici 82 \textbf{ 3}, 629 (2007),
  {math.GT/0412307}.

\bibitem[Kho00]{khov}
M.~Khovanov, \textsl{ A categorification of the {J}ones polynomial},
\newblock Duke Math. J. \textbf{ 101}(3), 359--426 (2000), {math.QA/9908171}.

\bibitem[Kho03]{khov-patterns}
M.~Khovanov, \textsl{ Patterns in knot cohomology. {I}},
\newblock Experiment. Math. \textbf{ 12}(3), 365--374 (2003),
  {math.QA/0201306}.

\bibitem[Lac04]{lackenby-halc}
M.~Lackenby, \textsl{ The volume of hyperbolic alternating link complements},
\newblock Proc. London Math. Soc. (3) \textbf{ 88}(1), 204--224 (2004),
  {math.GT/0012185},
\newblock With an appendix by Ian Agol and Dylan Thurston.

\bibitem[Lic97]{lickorish-knottheory}
W.~B.~R. Lickorish,
\newblock \textsl{ An introduction to knot theory}, volume 175 of \textsl{
  Graduate Texts in Mathematics},
\newblock Springer-Verlag, New York, 1997.

\bibitem[Low07]{lowrance-hfk-turaev}
A.~Lowrance,
\newblock On knot Floer width and Turaev genus, 2007.

\bibitem[Low09]{lowrance-thick-rational}
A.~Lowrance,
\newblock The Khovanov width of twisted links and closed 3-braids, 2009.

\bibitem[Man05]{manturov-turaev}
V.~Manturov,
\newblock Minimal diagrams of classical and virtual links, 2005.

\bibitem[MO07]{manolescu-2007}
C.~Manolescu and P.~Ozsvath,
\newblock On the Khovanov and knot Floer homologies of quasi-alternating links,
  2007.

\bibitem[Pla06]{olga}
O.~Plamenevskaya, \textsl{ Transverse knots and {K}hovanov homology},
\newblock Math. Res. Lett. \textbf{ 13}(4), 571--586 (2006), {math.GT/0412184}.

\bibitem[Pur04]{purcell-cusps}
J.~S. Purcell,
\newblock Cusp shapes under cone deformation, 2004.

\bibitem[Sto06]{stosic-2006}
M.~Stosic,
\newblock Homology of torus links, 2006.

\bibitem[Suz06]{suzuki-2006}
R.~Suzuki,
\newblock Khovanov homology and Rasmussen's s-invariants for pretzel knots,
  2006.

\bibitem[Tur06]{turner-2006}
P.~Turner,
\newblock A spectral sequence for Khovanov homology with an application to
  (3,q)-torus links, 2006.

\bibitem[Wat08]{watson-surgery}
L.~Watson,
\newblock Surgery obstructions from Khovanov homology, 2008.

\bibitem[Wee]{snappea}
J.~Weeks,
\newblock Snap{P}ea,
\newblock \url{http://www.geometrygames.org/SnapPea/index.html}.

\end{thebibliography}
\end{document}